\newtheorem*{rep@theorem}{\rep@title}
\newcommand{\newreptheorem}[2]{%
\newenvironment{rep#1}[1]{%
 \def\rep@title{#2 \ref{##1}}%
 \begin{rep@theorem}}%
 {\end{rep@theorem}}}
\newtheorem{thm}{Theorem}[section]
\newtheorem{lemma}[thm]{Lemma}
\newtheorem{prop}[thm]{Proposition}
\newtheorem{defn}[thm]{Definition}
\newtheorem{cor}[thm]{Corollary}
\begin{document}
\title{Probabilistic Fr\'{e}chet Means \\for Time Varying Persistence Diagrams\footnote{AMS Subject Classification: 55, 60}}


\author{Elizabeth Munch$^{1}$, Katharine Turner$^{2}$, Paul Bendich$^{2}$,  Sayan Mukherjee$^{4}$,\\ Jonathan Mattingly$^{5}$, and John Harer$^{6}$}
\address{$^1$ \textit{Corresponding author}, Dept of Mathematics \& Statistics, University at Albany -- SUNY, \textit{emunch\at albany.edu}.}
\address{$^2$  Dept of Mathematics,  University of Chicago, \textit{kate\at math.uchicago.edu}.}
\address{$^3$ Dept of Mathematics, Duke University, \textit{bendich\at math.duke.edu}.}
\address{$^4$  Depts of Statistical Science, Computer Science, Mathematics, and Institute for Genome Sciences \& Policy,  Duke University, \textit{sayan\at stat.duke.edu}.}
\address{$^5$ Dept of Mathematics, Duke University, \textit{jonm\at math.duke.edu}.}
\address{$^6$  Depts of Mathematics, Computer Science, and Electrical and Computer Engineering, Duke University, \textit{john.harer@duke.edu}.}
 \date{\today}
\maketitle

\begin{abstract}

In order to use persistence diagrams as a true statistical tool, it would be very useful to have a good notion of mean and variance for a set of diagrams.
In \cite{Mileyko2011}, Mileyko and his collaborators made the first study of the properties of the \Frechet mean in $(\DD_p,W_p)$, the space of persistence diagrams
equipped with the p-th Wasserstein metric. In particular, they showed that the \Frechet mean of a finite set of diagrams always exists, but is not necessarily unique.
The means of a continuously-varying set of diagrams do not themselves (necessarily) vary continuously, which presents obvious problems
when trying to extend the \Frechet mean definition to the realm of vineyards.

We fix this problem by altering the original definition of \Frechet mean so that it now becomes a probability measure on the set of persistence diagrams; in a nutshell, the mean of a set of diagrams will be a weighted sum of atomic measures, where each atom is itself a persistence diagram determined using a  perturbation of the input diagrams.
This definition gives for each $N$ a map $(\DD_p)^N \to \P(\DD_p)$.
We show that this map is H\"older continuous on finite diagrams and thus can be used to build a useful statistic on time-varying persistence diagrams, better known as vineyards.
\end{abstract}

\section{Introduction}

The field of topological data analysis (TDA) was first introduced \cite{Edelsbrunner2000} in 2000, and has rapidly been applied to many different  areas: for example, in the study of protein structure \cite{Agarwal2006,Ban2004,Headd2007}, plant root structure \cite{Galkovskyi2012}, speech patterns \cite{Brown2009},  image compression and segmentation \cite{Carlsson2008, Edelsbrunner2003}, neuroscience \cite{Dabaghian2012}, orthodontia \cite{Gamble2010}, gene expression \cite{Dequeant2008}, and signal analysis \cite{Perea2013}.

A key tool in TDA is the \emph{persistence diagram} \cite{Edelsbrunner2000,Chazal2009b}. 
Given a set of points $S$ in some possibly high-dimensional metric space, the persistence diagram $D(S)$ is a computable summary of the data which provides a compact two-dimensional 
representation of the multi-scale topological information carried by the point cloud; 
see Fig.~\ref{F:circleExample} for an example of such a diagram and Section \ref{sec:DiagVin} for a more rigorous description. 
If the point cloud varies continuously over time (or some other parameter) then the persistence
diagrams vary continuously over time \cite{Cohen-Steiner2007}; the diagrams stacked on top of each other then form what is called a vineyard \cite{Cohen-Steiner2006}.

A key part of data analysis is to model variation in data.
In particular, there is an interest in object oriented data analysis where the data of study is a more complicated object than just points in Euclidean space.
Thus, there has been a recent effort to study the mean and variance of a set of persistence diagrams \cite{Mileyko2011,Turner2011,Blumberg2012,Bubenik2012}, as well as nice convergence rates for persistence diagrams of larger and larger point clouds sampled from a compactly-supported measure \cite{chazal2013}.
There are a variety of reasons to want to characterize statistical properties of diagrams. 
For example, given a massive point cloud $S$, there is a computational and statistical advantage to subsampling the data to produce smaller point clouds $S_1, \ldots, S_n$, and computing the mean and variance of the set of persistence diagrams obtained from the $n$ subsampled data sets. 
In statistical terminology, this example consists of computing a bootstrap estimate \cite{Efron1994} of persistence diagram of the data.
This procedure requires a good definition for the mean (and variance) of a set of persistence diagrams.

The papers \cite{Mileyko2011,Turner2011} make careful study of the geometric and analytic properties of the space $(\DD_p, W_p)$ of persistence diagrams equipped with the Wasserstein metric. This enables defining the mean and variance via the Fr\'echet function\cite{Mileyko2011}, and an algorithm for their computation\cite{Turner2011}.
There are, however, unfortunate problems with using the Fr\'echet mean: the mean of a set of diagrams is not necessarily unique nor continuous.

In this paper, we provide an alternative definition for the mean of a set of diagrams which we call the Probabilistic \Frechet Mean (PFM).
By combining the notions of the Fr\'echet mean and the trembling hand equilibrium in game theory \cite{selten1975reexamination}, we construct a mean that is not itself a diagram, but is rather a probabilistic mixture of diagrams and
thus an element of $\PP(\DD_p)$, the space of probability distributions over persistence diagrams. 
Uniqueness of this new mean will be obvious from the definition we propose.
More crucially, we prove the following corollary to the main technical result, Thm.~\ref{Thm:Main}.

\begin{repcor}{Cor:MapCont}
 Let 
 \begin{center}
 \begin{tabular}{rccc}
  $\Phi:$& $(S_{M,K})^N$  &$\longrightarrow$&  $\PP(S_{M,NK})$\\
        & $(X_1,\cdots,X_N)$ &$\longmapsto$ & $\mu_{X}$
 \end{tabular}
\end{center}
be the map which sends a set of diagrams to its PFM.
Then $\Phi$ is \Holder continuous with exponent $1/2$. 
That is, there is a constant $C'$ such that
\begin{equation*}
\WW_2(\mu_X,\mu_Y)\leq C'\sqrt{{\overrightarrow{d_2}(X,Y)}}
\end{equation*}
for all $X,Y \in (S_{M,K})^N$. 
 Here $\WW_2$ is the Wasserstein metric on the space of probability distributions over the space of persistence diagrams, and $\overrightarrow{d_2}$ is the metric on $(S_{M,K})^N$ induced from using the Wassertstein analogous metric on the space of persistence diagrams coordinate-wise.
\end{repcor}

Thus, if we compute the new mean on each step of a path in $\DD_p$, the resulting object gives a path in $\PP(\DD_p)$, thus making the construction amenable to analyzing distributions of vineyards.
This is stated specifically in the following corollary.

\begin{repcor}{Cor:VineCont}
 Let $\gamma_1,\cdots,\gamma_N:[0,1]\to \DD_2$ be vineyards in $\VV_2$.
 Then 
 \begin{equation*}
  \begin{array}{rccc}
   \mu_\gamma:&[0,1] &\longrightarrow& \PP(S_{M,K})\\
   & t & \longmapsto & \mu_{\gamma_1(t),\cdots,\gamma_N(t)}   
  \end{array}
 \end{equation*}
 is continuous.
\end{repcor}

Finally, we give examples of this mean computed on diagrams drawn from samples of various point clouds, and introduce a useful way to visualize them.

\paragraph{{\bf Outline}}
Section \ref{sec:DiagVin} contains definitions for persistence diagrams and vineyards, as well as a discussion of the space $(\DD_p, W_p)$.
The contributions of \cite{Mileyko2011} and \cite{Turner2011} are reviewed more fully in Section \ref{sec:FMD}, and
the non-uniqueness issue is also discussed in that section.
We give our new definition, the probabilistic \Frechet mean (PFM), in Section \ref{sec:MeanDist}, and prove its desirable theoretical properties in Section \ref{sec:Theorems}.
Examples, implementation details, and a discussion of visualization are in Section \ref{sec:Examples},
and the paper concludes with some discussion in Section \ref{sec:Conclusion}.

\section{Diagrams and Vineyards}
\label{sec:DiagVin}

Here we give the basic definitions for persistence diagrams and vineyards, and then move on to a description of the metric space $(\DD_p, W_p)$.
For more details on persistence, see \cite{Edelsbrunner2008}.
We assume the reader is familiar with homology;  \cite{Munkres2} is a good reference.
We note that all homology groups in this paper are computed with field coefficients.

\subsection{Persistent Homology}

To define persistent homology, we start with a nested sequence of topological spaces,
\begin{equation}  \label{E2:Filtration}
 \emptyset = \X_0 \subseteq \X_1 \subseteq \X_2 \subseteq \cdots \subseteq \X_n = \X.
\end{equation}
Often this sequence arises from the sublevel sets of a continuous function, $f: \X \to \R$, 
where $\X_i = f \inv ((-\infty, a_i])$ with $a_0 \leq a_1\leq \cdots \leq a_n$.
For many applications, this function is the distance function 
\begin{equation*}
 d_S(x) = \inf_{v \in S} \|x-v\|
\end{equation*}
from a point cloud $S$ such as in the example of Fig.~\ref{F:circleExample}.
In this case, a sublevel set can be visualized as a union of balls around the points in $S$.

\begin{figure}[tb]
 \centering
  \includegraphics[width = .4\textwidth]{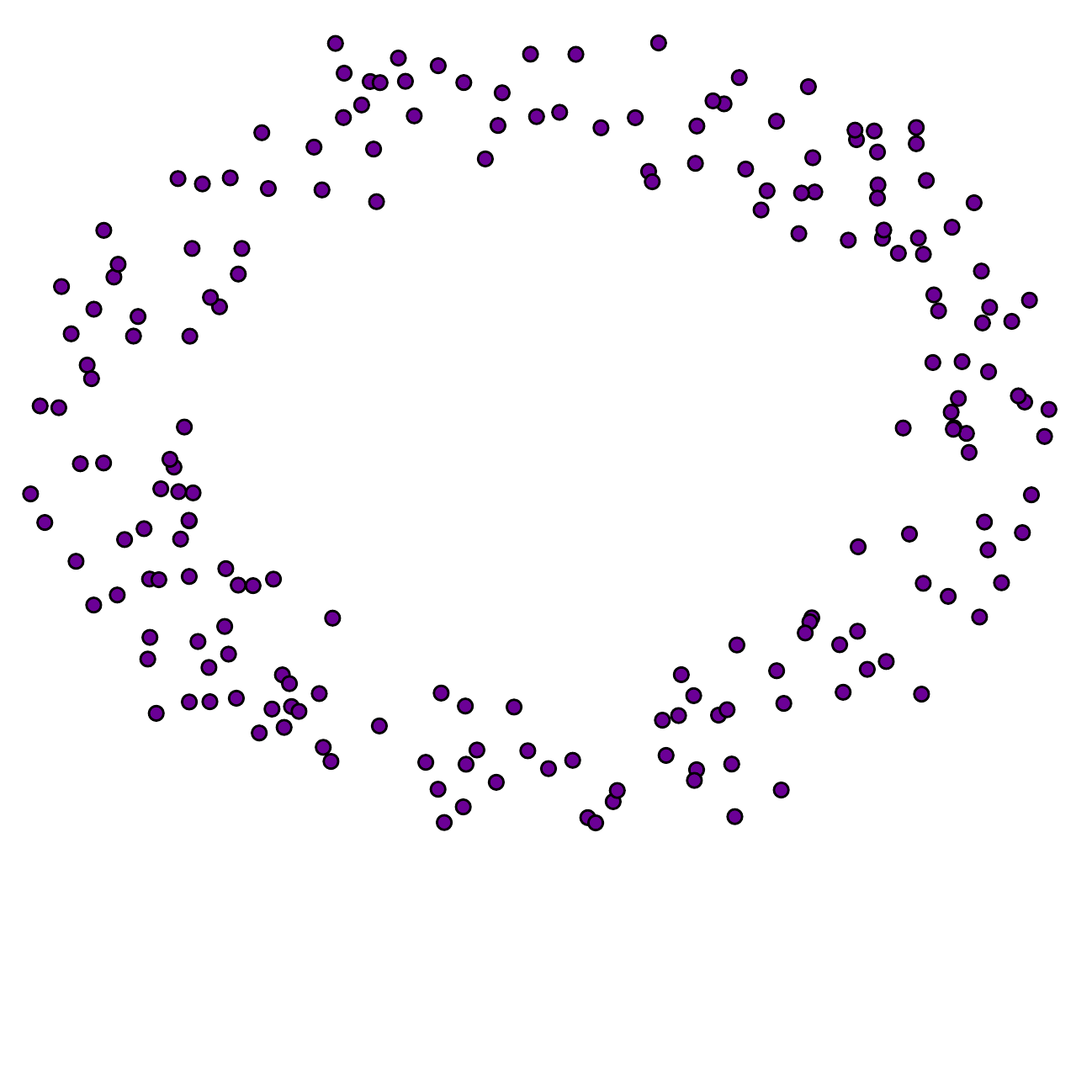}
 \qquad
  \includegraphics[width = .4\textwidth]{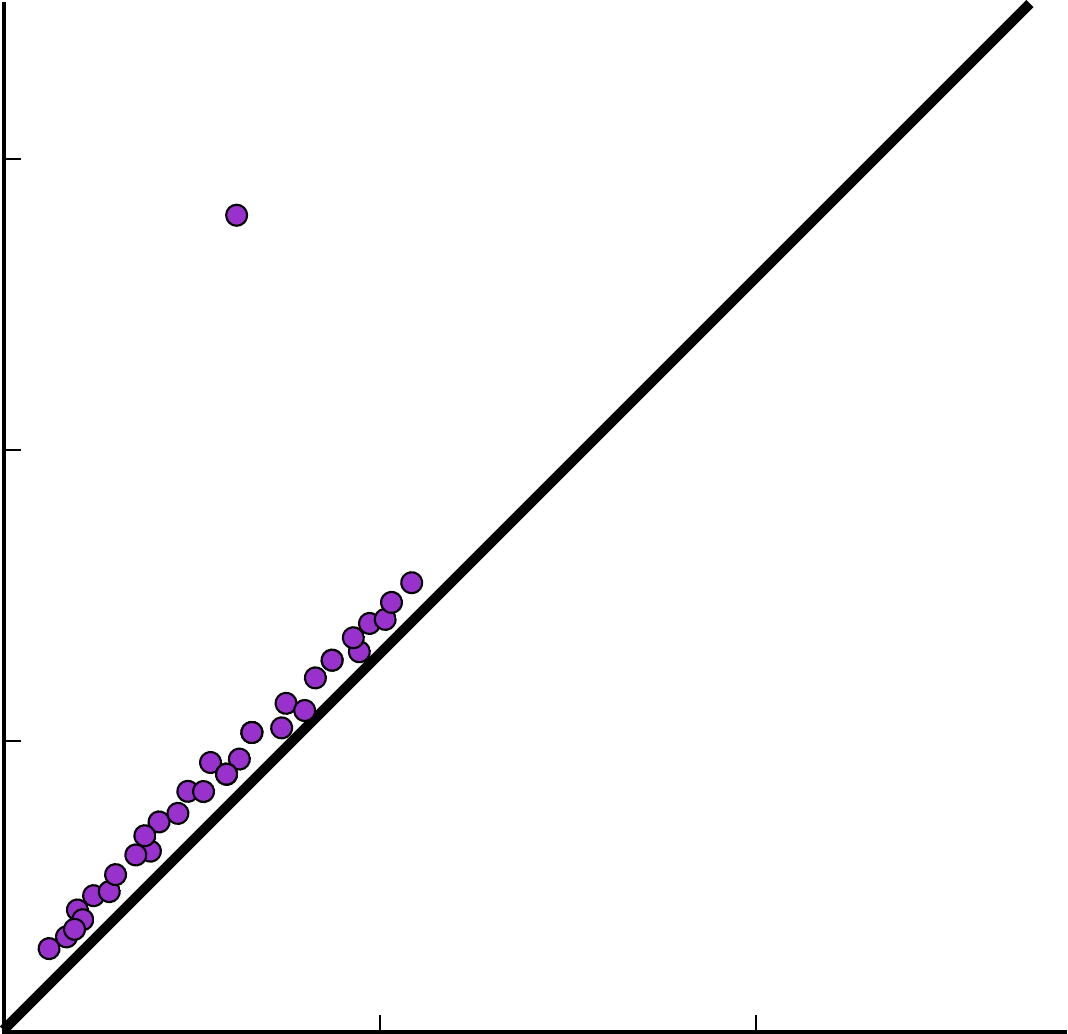}
 \caption[Noisy circular point cloud example]{A point cloud, shown at left, is sampled from an annulus.  In order to summarize the topological data, we look at the sublevel sets of the distance function from the set of points, then construct the persistence diagram, shown at right.  The points near the diagonal are considered noise, while the single point far from the diagonal gives information about the hole in the annulus. }
\label{F:circleExample}
\end{figure}

The sequence of inclusion maps from Eqn.~\eqref{E2:Filtration} induces maps on homology for any dimension $r$,
\begin{equation}\label{E2:Homology}
 \xymatrix{
 0 \ar[r] & H_r(\X_1) \ar[r] & H_r(\X_2) \ar[r] & \cdots \ar[r] & H_r(\X_n).
 }
\end{equation}
In order to understand the changing space, we look at where homology classes appear and disappear in this sequence.  

Let $\phi_i^j: H_r(\X_i) \longrightarrow H_r(\X_j)$ be the composition of the appropriate maps from Eqn.~\eqref{E2:Homology}.
The homology class $\gamma \in H_r(\X_i)$ is said to be born at $\X_i$ if it is not in the image of $\phi_{i-1}^i$.
This same class is said to die at $\X_j$ if its image in $H_r(\X_{j-1})$ is not in the image of $\phi_{i-1}^{j-1}$, but its image in $H_r(\X_j)$ is in the image of $\phi_{i-1}^{j}$. 
In the case that the spaces arose from the level sets of a function $f$ as defined above, we define the persistence of a class $\gamma$ which is born at $\X_i = f\inv((-\infty,a_i])$ and dies at $\X_j=f\inv((-\infty,a_j]$ to be $\pers(\gamma) = a_j-a_i$.  


Notice that this equivalence can also be seen from working with persistence modules \cite{Chazal2009b}, an abstraction of the definition presented here where persistence is defined at the algebraic level.
In fact, given any set of maps between vector spaces, 
\begin{equation*}
 \xymatrix{
  V_1 \ar[r] & V_2 \ar[r] & \cdots \ar[r] & V_n,
 }
\end{equation*}
we can analogously define the birth and death of classes in the vector spaces.

In order to visualize the changing homology, we draw a persistence diagram $d_r$ for each dimension $r$.  
A persistence diagram is a set of points with multiplicity in the upper half plane $\{ (b,d) \in \R^2 \mid d\geq b\}$ along with countably infinite copies of the points on the diagonal $\Delta = \{(x,x) \in \R^2\}$.
For each class $\gamma$ which is born at $\X_i$ and dies at $\X_j$, we draw a point at $(a_i,a_j)$.
A point in the persistence diagram which is close to the diagonal represents a class which was born and died very quickly.  
A point which is far from the diagonal had a longer life.  
Depending on the context, this may mean the class  is more important, 
or more telling of the inherent topology of the space.  
See Fig.~\ref{F:circleExample} for an example.

\subsection{The Space $(\DD_p,W_p)$}

In order to define a framework for statistics, we will ignore the connection to topological spaces or maps between vector spaces and instead focus on the space of persistence diagrams abstractly. 
\begin{defn}
An abstract persistence diagram is a countable multiset of points along with the diagonal, $\Delta = \{(x,x) \in \R^2 \mid x \in \R\}$, with points in $\Delta$ having  countably infinite multiplicity.
\end{defn}
The distance between these abstract diagrams is the $p^{\textrm{th}}$ Wasserstein distance.

\begin{defn}\label{D: Wass}
The $p^\textrm{th}$ Wasserstein distance between two persistence diagrams $X$ and $Y$ is given by 
\begin{equation*}
 W_p[\sigma](X,Y) := \inf_{\phi:X \to Y} \left[  \sum_{x \in X} \sigma(x,\phi(x))  ^p \right]^{1/p}
\end{equation*}
where $1 \leq p \leq \infty$, $\sigma$ is a metric on the plane, and $\phi$ ranges over 
bijections between $X$ and $Y$. 
\end{defn}
 We often use $\sigma = L_q$.
 Notice that for $p = \infty$,
\begin{equation*}
 W_\infty[L_q](X,Y) := \inf_{\phi:X \to Y} \sup_{x \in X} \norm{x-\phi(x)}_q.
\end{equation*}
$W_\infty[L_\infty]$ is often referred to as the bottleneck distance.  
For the majority of this paper, we will be using $W_2[L_2]$, which we refer to as $W_2$ for brevity.
We also assume that $\|\cdot\|$ implies $L_2$ distance.



\begin{defn}
The space of persistence diagrams $\DD_p$  consists of abstract persistence diagrams with finite distance to the empty diagram $D_\emptyset$, which is the diagram which consists of only the points on the diagonal.  
That is,
\begin{equation*}
 \DD_p = \{X \mid W_p (X, D_\emptyset ) < \infty\}
\end{equation*}
along with the $p^\textrm{th}$-Wasserstein metric, $W_p = W_p[\sigma]$, from Definition 
\ref{D: Wass}.
\end{defn}

The authors in \cite{Mileyko2011} show that $(\DD_p, W_p[L_\infty])$ is a Polish (complete and separable) space.
They also give a description of all of the compact sets in this space.
In \cite{Turner2011}, it is shown that these results hold if we work instead with $(\DD_2,W_2[L_2]) = (\DD_2,W_2)$, with the added benefit that it is a non-negatively curved Alexandrov space (a geodesic space with a lower bound on curvature).
Thus, every pair of diagrams has a minimal geodesic between them and this geodesic can be defined using a matching between the diagrams which minimizes Wasserstein distance.
So, for the remainder of the paper, we will focus on the space $(\DD_2,W_2)$.

%

\subsection{Vineyards}
The first definitions of vineyards \cite{Cohen-Steiner2006,Morozov2008} were used in the well-behaved case of a homotopy between two functions.  
In this case,  each off-diagonal point of a diagram varies continuously in time
and is called a vine.  
Vines can start and end at off diagonal points at times $0$ or $1$, or have starting or ending points on the diagonal for any $t$, 
see Fig.~\ref{F: Vineyard}.
 
As we do with persistence diagrams, let us  consider the space of abstract vineyards to be
the space of paths in persistence diagram space.
\begin{defn}
 The space of abstract vineyards is 
 \begin{equation*}
  \VV_2 = \{v:[0,1] \to \DD_2\mid v\textrm{ is continuous}\},
 \end{equation*}
 the space of continuous maps from the unit interval to $\DD_2$ where $v$ is continuous with respect to $W_2$.
\end{defn}


\begin{figure}
 \centering
 \includegraphics[width = .3\textwidth]{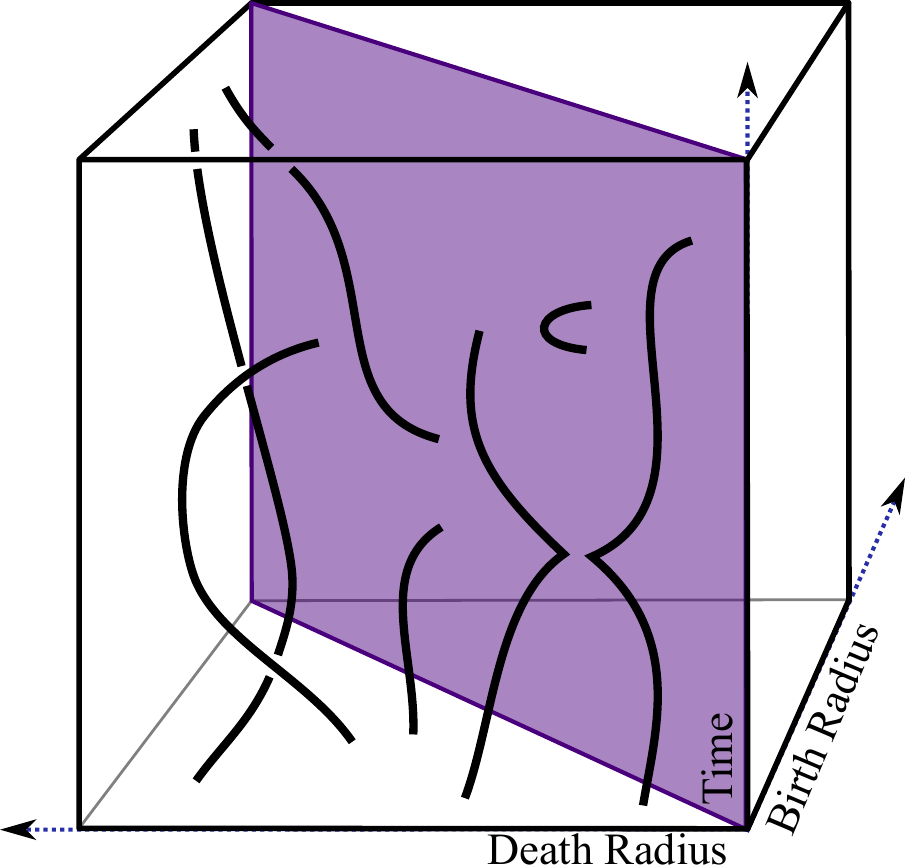}
\caption[A vineyard]{An example of a vineyard. For each time, given on the $z$-axis, there is a persistence diagram.   Since vineyards arising from continuous point clouds are continuous, each point in the diagram traces out a path called a vine.  These vines can have endpoints on the starting or ending times, or on the plane which projects to the diagonal. }
 \label{F: Vineyard}
\end{figure}

\section{\Frechet Means of Diagrams}
\label{sec:FMD}

This section reviews previous definitions of the mean of a set  of diagrams \cite{Mileyko2011} and an algorithm to compute the mean \cite{Turner2011}.
We will define the mean of a diagram as the Fr\'echet mean, give the algorithm for the computation of this mean, and finally present the non-uniqueness problem.

\subsection{\Frechet Means}

The \Frechet mean generalizes the mean of a set of points or a distribution in Euclidean space to any metric space.
It can be thought of as a generalization of the arithmetic mean in that it minimizes the sum of the square distances to points in the distribution.
Given a probability space $(\DD_2, \BB (\DD_2), \PP )$ where $\BB(\DD_2)$ consists of Borel sets of $\DD_2$, we can define the \Frechet mean as follows.


\begin{defn} \label{D: Frechet}
 Given a probability space $(\DD_2, \BB (\DD_2), \PP )$,
 \begin{equation*}
  \begin{array}{rccc}
 F_\PP :& \DD_2 &\longrightarrow &  \R\\
  & X& \longmapsto & \int_{\DD_2} W_2 (X,Y)^2 \, d\PP (Y) 
  \end{array}
 \end{equation*}
is the \Frechet function.
 The quantity
 \begin{equation*}
\Var_\PP = \inf_{X \in \DD_2} 
\left[ F_\PP(X) 
\right]
\end{equation*}
is the \Frechet variance of $\PP$ and the set at which the value is obtained
 \begin{equation*}
\E(\PP) = \{X  \mid  F_\PP (X) = \Var_{\PP} \}
\end{equation*}
is the \Frechet expectation, also called \Frechet mean.
\end{defn}

The mean in this case need not be a single diagram, but may be a set of diagrams.
In fact, there is no guarantee that $\E(\PP)$ is even non-empty.  
However, it was proved in \cite{Mileyko2011} that the \Frechet mean for $(\DD_p,W_p[L_\infty])$ is non-empty for certain types of well-behaved probability measures on $\DD_p$, and this result can be immediately extended to $(\DD_2,W_2[L_2])$ as in \cite{Turner2011}.
\begin{thm}\label{Thm: NonEmpty Frechet 1}
 Let $\PP$ be a probability measure on $(\DD_2, \BB (\DD_2))$ with a finite second moment.
If $\PP$ has compact support, then $\E(\PP) \neq \emptyset$. 
\end{thm}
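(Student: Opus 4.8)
The plan is to establish existence of a minimizer of the Fréchet function $F_\PP$ by a compactness argument: exhibit a bounded, nonempty sublevel set of $F_\PP$, show that sublevel sets of $F_\PP$ are relatively compact in $(\DD_2, W_2)$ (using the characterization of compact subsets of diagram space from \cite{Mileyko2011}, extended to $W_2$ in \cite{Turner2011}), and then check that $F_\PP$ is lower semicontinuous so that the infimum is attained on the closure. Concretely, first I would fix some $X_0$ in the support of $\PP$ and note $F_\PP(X_0) < \infty$ because $\PP$ has compact support and finite second moment; this gives a finite value $c_0 := F_\PP(X_0)$, so the sublevel set $L = \{X : F_\PP(X) \le c_0\}$ is nonempty and $\inf_X F_\PP(X)$ is the same whether taken over $\DD_2$ or over $L$.

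The second step is to show $L$ is relatively compact. Since $\PP$ is supported on a compact set $\mathcal K \subset \DD_2$, there is a uniform bound $R$ with $W_2(Y, D_\emptyset) \le R$ for all $Y \in \mathcal K$. For $X \in L$, Jensen/the triangle inequality give a bound on $W_2(X, D_\emptyset)$ in terms of $c_0$ and $R$, and moreover one gets uniform control, as a function of $\epsilon$, on the total squared persistence of points of $X$ lying within distance $\epsilon$ of the diagonal, together with a uniform bound on the number of points of $X$ outside any fixed neighborhood of the diagonal within a fixed box — again by comparing $X$ against a diagram $Y \in \mathcal K$ through an optimal matching. These are exactly the hypotheses in the Mileyko–Turner characterization of (pre)compact subsets of $(\DD_2, W_2)$, so $\overline L$ is compact.

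Third, I would verify that $F_\PP$ is lower semicontinuous: if $X_n \to X$ in $W_2$, then $W_2(X_n, Y)^2 \to W_2(X, Y)^2$ pointwise in $Y$ (since $W_2$ is a metric, $Y \mapsto W_2(X,Y)$ is continuous and $W_2(X_n, \cdot) \to W_2(X, \cdot)$ uniformly), and $\PP$-integrability is controlled uniformly on the compact support, so Fatou (or dominated convergence) yields $F_\PP(X) \le \liminf_n F_\PP(X_n)$; in fact continuity holds. Combining: a minimizing sequence in $L$ has a $W_2$-convergent subsequence with limit in $\overline L$, and lower semicontinuity forces that limit to achieve the infimum, so $\E(\PP) \ne \emptyset$.

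The main obstacle is the relative compactness step — verifying the precise quantitative conditions of the compactness characterization for diagram space. One must translate a bound on $F_\PP(X) = \int W_2(X,Y)^2\,d\PP(Y)$ into the "no mass escaping to infinity" and "uniform tightness near the diagonal" conditions, which requires carefully handling optimal matchings between $X$ and diagrams $Y$ in the support and controlling how points near the diagonal of $X$ can be matched (possibly to the diagonal) cheaply. This is where finite second moment and compact support of $\PP$ are both genuinely used. Since the analogous argument in $W_p[L_\infty]$ is carried out in \cite{Mileyko2011} and the Alexandrov-geometry upgrade to $(\DD_2, W_2)$ is in \cite{Turner2011}, I would cite those results for the compactness characterization and focus the write-up on the sublevel-set bound and the semicontinuity.
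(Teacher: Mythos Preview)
Your proposal is entirely reasonable, but you should know that the paper does not actually prove this theorem: it is stated with attribution to \cite{Mileyko2011} (for $(\DD_p,W_p[L_\infty])$) and \cite{Turner2011} (for the extension to $(\DD_2,W_2[L_2])$), with no proof given in the text. So there is no ``paper's own proof'' to compare against beyond those citations.

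That said, the strategy you outline --- finite value of $F_\PP$ at some point in the support, relative compactness of sublevel sets via the Mileyko--Turner characterization of compact subsets of diagram space, and lower semicontinuity of $F_\PP$ --- is precisely the argument carried out in those references. Your identification of the relative-compactness step as the crux is correct: the work in \cite{Mileyko2011} is exactly in verifying the uniform ``off-diagonal boundedness'' and ``tightness near the diagonal'' conditions for sublevel sets, and compact support of $\PP$ is what makes those estimates go through. Your write-up plan (cite the compactness characterization, focus on the sublevel-set bound and semicontinuity) matches what the paper itself effectively does by deferring to those sources.
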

A similar result holds when the tail probabilities of the distribution $\PP$ decay fast enough,
see \cite{Mileyko2011} for details. 



\subsection{Matchings, Selections, and Groupings}

\begin{figure}
\centering
 \begin{subfigure}[b]{.4\textwidth}
  \includegraphics[width = \textwidth]{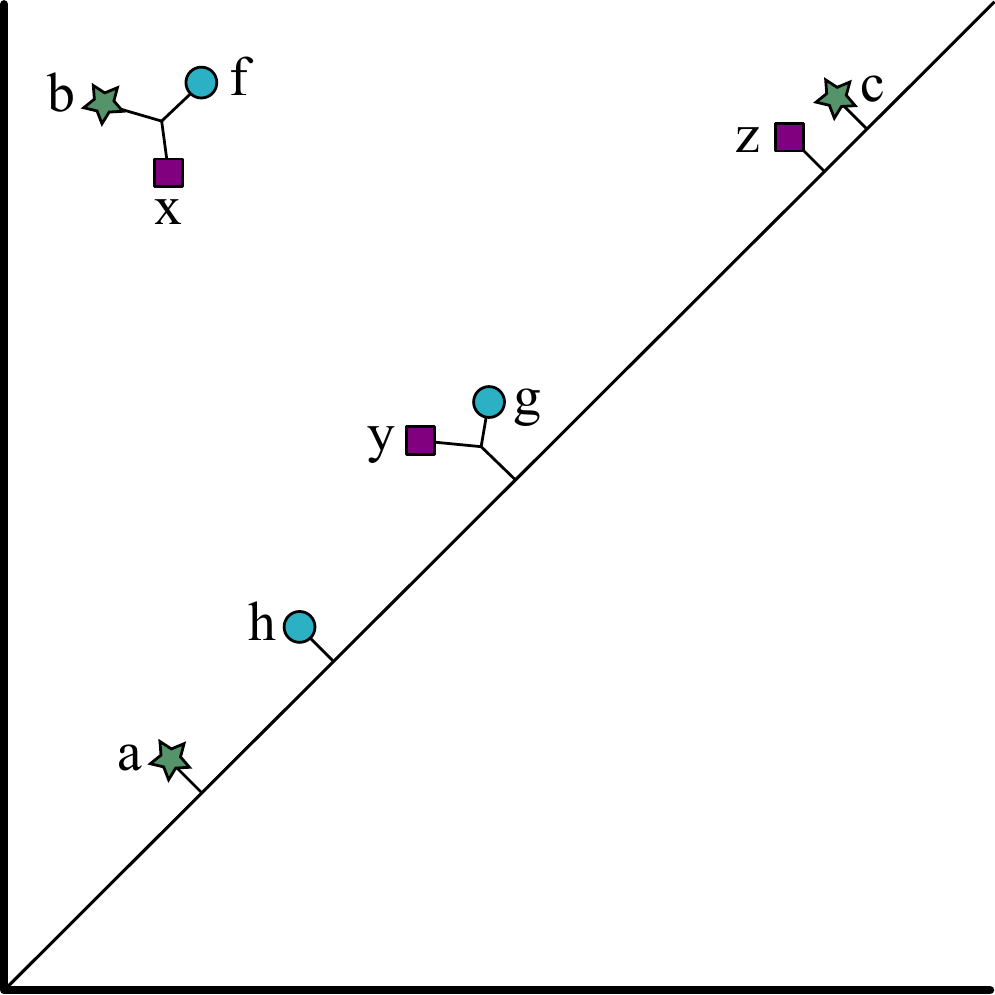}
  \caption{}
  \label{F:GroupingA}
 \end{subfigure}
 \qquad
 \qquad
 \begin{subfigure}[b]{.4\textwidth}
  \includegraphics[width = \textwidth]{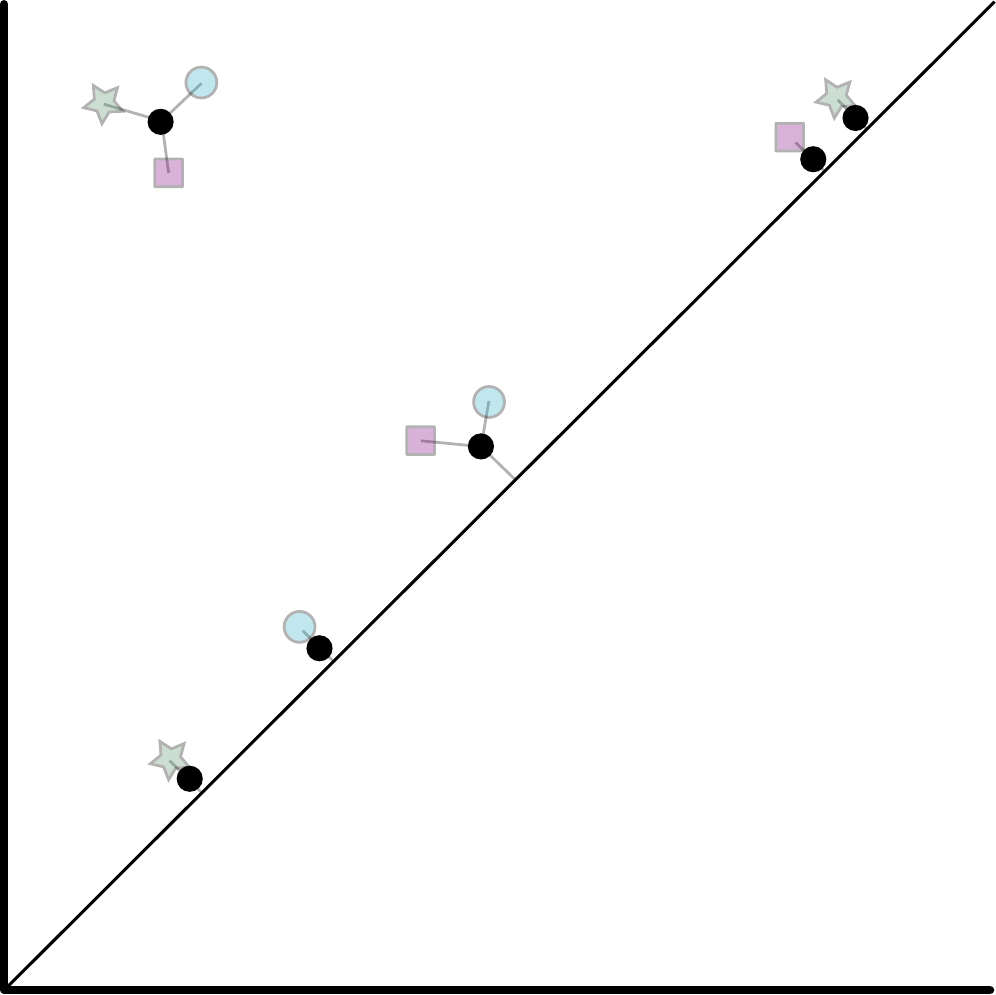}
  \caption{}
  \label{F:GroupingB}
 \end{subfigure}
 \caption[An example of a grouping for three persistence diagrams and the corresponding mean diagram.]{An example of a grouping for three overlaid persistence diagrams, $D_\blacksquare$, $D_\bigstar$, and $D_\bullet$ is given in (A). In this example, the grouping has four selections and the corresponding grouping matrix is given in Eqn.~\ref{E:GroupingEx}. The dark circles in diagram (B) give the mean diagram associated to this particular grouping.}
\label{F:Grouping}
\end{figure}

The focus of \cite{Mileyko2011} was to develop the probability theory required for statistical procedures on persistence diagrams, including defining a mean.
In \cite{Turner2011} an algorithm to compute an estimate of the \Frechet mean of a set of diagrams was given.
This algorithm centered around understanding an analogue to the Wasserstein distance matching in order to work with more than two diagrams.

The representation of a diagram for the purposes of these definitions is a list of its off-diagonal points, $X = [x_1,\cdots,x_k]$.  
We implicitly assume that every diagram has infinitely many copies of the diagonal.
Since our main theorem is stated with regards to diagrams with finitely many off-diagonal points, we will also implicitly assume that this list is finite.

\begin{defn}
Let $X = [x_1,\cdots,x_k]$ and $Y = [y_1,\cdots,y_m]$ be diagrams.
 A matching between $X$ and $Y$ is a bijection $\phi:X \to Y$.
 An optimal matching is one which attains the Wasserstein distance in Def.~\ref{D: Wass}.
\end{defn}

We now need to understand how to define matchings when we have $N$ diagrams instead of just two.  
For this, we define selections and groupings which restrict to matchings when $N=2$.

\begin{defn}
 Given a set of diagrams $X_1,\cdots,X_N$, a \textit{selection} is a choice of one point from each diagram, where that point could be $\Delta$.  
 The \textit{trivial selection} for a particular off-diagonal point $x \in X_i$ is the selection $m_x$ which chooses $x$ for $X_i$ and $\Delta$ for every other diagram.
 
 A \textit{grouping} is a set of selections so that every off-diagonal point of every diagram is part of exactly one selection.
\end{defn}

A grouping for $N$ diagrams which has $k$ selections can be stored as a $k\times N$ matrix $G$ where entry $G[j,i] = x$ means that the $j^\textrm{th}$ selection has point $x \in X_i$. 
See Fig.~\ref{F:Grouping} for an example;   
in this case, the grouping shown is given by the matrix
\begin{equation}\label{E:GroupingEx}
\bordermatrix{
   & D_\bigstar & D_\blacksquare  &D_\bullet \cr
1  & b &  x  & f \cr
2  & a  &  \Delta & \Delta \cr
3  & \Delta & y & g \cr
4  & \Delta  &   z       &\Delta \cr
5  & \Delta & \Delta & h \cr
6  & c & \Delta & \Delta
                } 
\end{equation}
where $\Delta$ represents the diagonal.
Note that we consider grouping to be equivalent up to reordering of the selections, and we can add or remove as many $(\Delta,\Delta,\cdots,\Delta)$ rows as we want.

The \textit{mean of a selection $s$} is the point denoted $\mean(s)$ which minimizes the sum of the square distances to the elements of the selection. 
When necessary, the notation $\mean_X(s)$ is used to emphasize the diagram set of interest.
The computation of this point will be discussed in Sect.~\ref{S:MeansSelections}.

The \textit{mean of a grouping}, $\textrm{mean}(G)$, is a diagram in $\DD_2$ with a point at the mean of each selection.
When it is unclear as to the set of diagrams from which this mean arose, we will denote it as $\textrm{mean}_X(G)$.
Note that the mean of the selection yields a point while the mean of a grouping yields a diagram.

It should be noted that there are close ties between these groupings and the \Frechet mean. 
The diagrams in the \Frechet mean (and all other local minima of the \Frechet function) are $\mean(G)$ for some grouping $G$ \cite{Turner2011}.
Thus, we define an \textit{optimal grouping} $G$ to be one such that $\mean_X(G)$ is in the \Frechet mean.

\subsection{Issues with extensions to Vineyards}
\label{sec:Problem}

The algorithm given in \cite{Turner2011} utilizes the tight relationship between groupings and their means in order to find a local minimum of the \Frechet function.
Assume we are working with a set of diagrams $X_1,\cdots,X_N$.
The idea of the algorithm is to find a candidate diagram $Y$ for the mean, compute a minimal matching for each pair $(Y,X_i)$, and build up these matchings into a grouping for the whole set $X_1,\cdots,X_N$.
Then the candidate diagram $Y$ is replaced with the mean of the computed grouping and the process is repeated until it terminates. 
See Appendix \ref{Appendix:Algorithms} for details.

However, it is important to note that the non-uniqueness of optimal groupings leads to non-uniqueness of the \Frechet mean.
Consider the example of two diagrams in Fig.~\ref{F:SquareExampleVineyard}A.
Their mean diagrams are given in  \ref{F:SquareExampleVineyard}B.
In Fig.~\ref{F:SquareExampleVineyard}A, there are two persistence diagrams overlaid: $D_\blacksquare$ has square points $1$ and $2$, $D_\bullet$ has circle points $a$ and $b$.  
Since the four points lie exactly on a square, the grouping (which can also be called matching in this instance) to give the Wasserstein distance could either be $\{(a,1), (b,2)\}$ or $\{(a,2),(b,1)\}$.
Thus there are two diagrams which give a minimum of the \Frechet function: the diagram with $u$ and $v$, or the diagram with $x$ and $y$.

If  two vineyards pass through this configuration, the mean of the vineyards constructed by finding the mean at each time will not be continuous. 
Consider for example two vineyards of two points each who start in the grayed configuration of Fig.~\ref{F:SquareExampleVineyard}C and move along the dotted line to the darkened configuration.
At the bend of the dotted line, the points are at the corners of a square, so as in the example of Fig.~\ref{F:SquareExampleVineyard}B,  there are two possible choices for the mean.  
One is close to the means from the previous times, and one is close to the means from the following times.

This means that it is not beneficial to define the mean of a set of vineyards $\{V_i:[0,1] \to \DD_2\}$ by using the pointwise \Frechet mean on the set of diagrams $\{V_i(t)\}$ for a fixed $t$ as there is no notion of continuity.
Thus we must be more creative with our definition.

\begin{figure}
 \centering
 \begin{subfigure}[b]{.3\textwidth}
  \includegraphics[width = \textwidth]{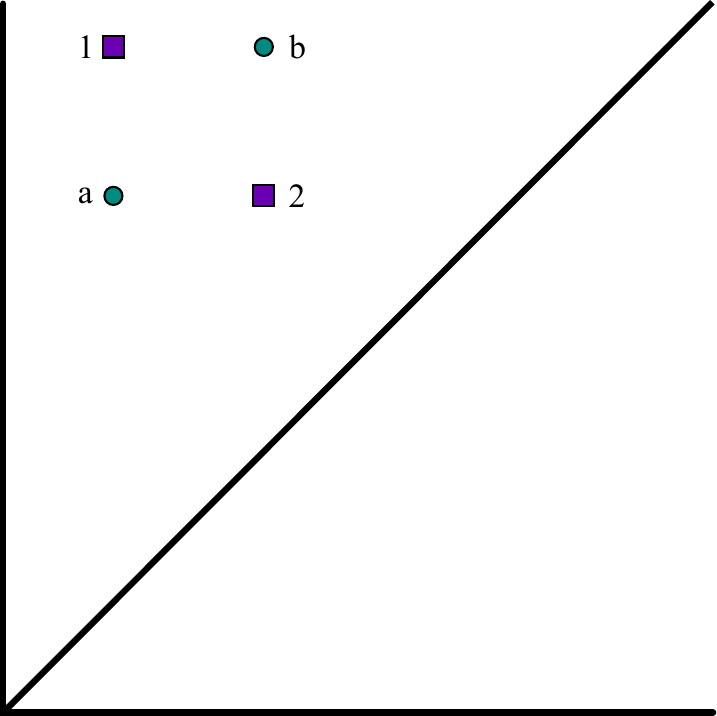}
  \caption{}
  \label{F:SquareExampleA}
 \end{subfigure}
 \hspace{16pt}
 \begin{subfigure}[b]{.3\textwidth}
  \includegraphics[width = \textwidth]{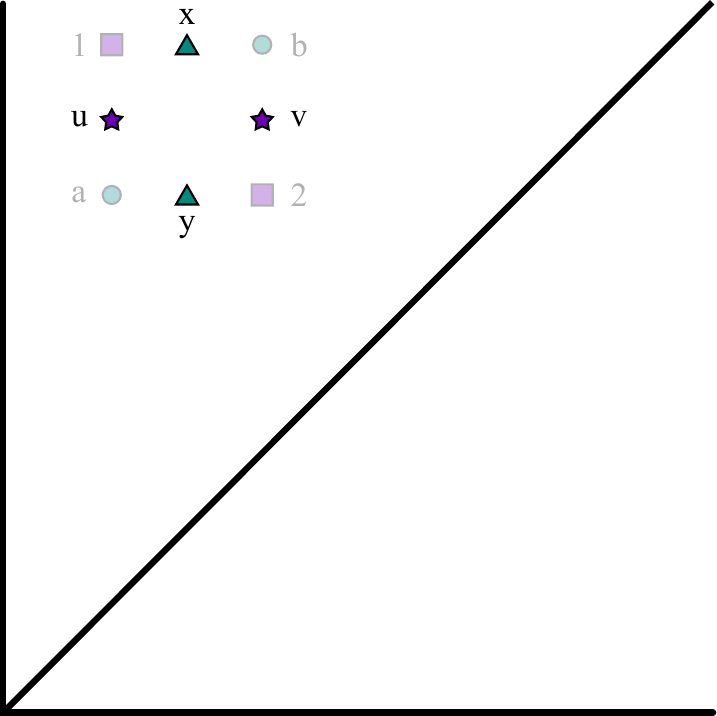}
  \caption{}
  \label{F:SquareExampleB}
 \end{subfigure}
%
 \hspace{16pt}
 \begin{subfigure}[b]{.3\textwidth}
  \includegraphics[width = \textwidth]{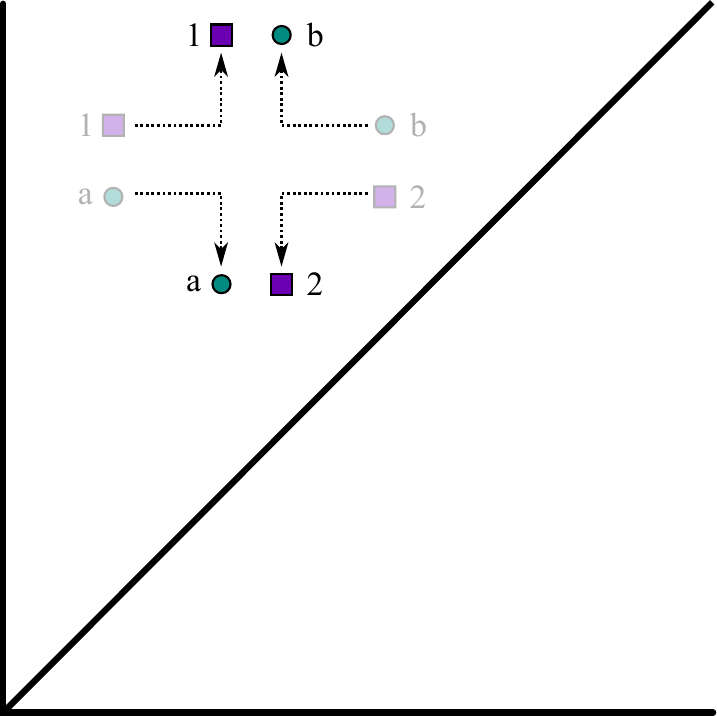}
  \caption{}
  \label{F:SquareExampleVineyardB}
 \end{subfigure}
 \caption[Two vineyards whose pointwise mean is not continuous.]{A counterexample to uniqueness of the \Frechet mean in $D_2$ gives an example of issue with means of vineyards.  Fig.~(A) shows two diagrams overlaid: $D_\blacksquare$ has points 1 and 2, $D_\bullet$ has points $a$ and $b$.  Since the grouping given by the Wasserstein distance is not unique, neither is the \Frechet mean.  The two possible means are given in Fig.~(B): one has points $x$ and $y$, the other has points $u$ and $v$. In Fig.~(C), we have two vineyards  which pass through the configuration of Fig.~(B).  The mean is continuous until the points get to the turn of the dotted line, where they form a square, and the mean jumps discontinuously.}
\label{F:SquareExampleVineyard}
\end{figure}

\section{The Mean as a Distribution}
\label{sec:MeanDist}

To overcome the lack of continuity of the mean vineyard and the non-uniqueness illustrated in Fig.~\ref{F:SquareExampleVineyard}, we will define the mean of a set of diagrams to be a distribution over persistence diagrams.
In order to prove continuity, the diagrams will be limited to $S_{M,K}$, the set of diagrams in $\DD_2$ with at most $K$ off-diagonal points, and all points $x =(x_1,x_2)$ satisfy $0 \leq x_1,x_2\leq M$.  

One important property of $S_{M,K}$ which we will utilize is that its diameter is finite.  
For a coarse bound, we note that for a diagram $X\in S_{M,K}$, any point is at most distance $\tfrac{\sqrt{2}}{2}M$ from the diagonal.  
Thus, $W_2(X,D_\emptyset) \leq \tfrac{\sqrt{2}}{2}KM$, and so the diameter is bounded by $\sqrt{2}KM$.
Obviously, however, this is a massive overestimate.

Consider the space $\PP(S_{M,K})$, the space of probability measures with finite second moment on $S \subset \DD_2$.  
This is of course a metric space with the standard probability Wasserstein distance as defined below.

\begin{defn} \label{D:WassProbability}
 The $p^\textrm{th}$-Wasserstein distance between two probability distributions, $\nu$ and $ \eta$, on metric space $(\X,d_\X)$ is
 \begin{equation*}
  \WW_p[d_\X](\nu,\eta) = \left[\inf_{\gamma \in \Gamma(\nu,\eta)} \int_{\X \times \X} d_\X(x,y)^p \, d\gamma(x,y)\right] ^{1/p}
 \end{equation*}
where $\Gamma(\nu,\eta)$ is the space of distributions on $\X \times \X$ with marginals $\nu$ and $\eta$ respectively. 
When $d_\X$ is obvious from context, we will instead write $\WW_p(\nu,\eta)$.
\end{defn}

Thus, we can use $\WW_2[W_2[L_2]]$ as the distance function on $\PP(S_{M,K})$, where the outside $\WW_2$ is the Wasserstein distance  of Def.~\ref{D:WassProbability} and the inside $W_2$ is the deterministic Wasserstein distance of Def.~ \ref{D: Wass}. 
Note that the map $Y \to \delta_Y$, where $\delta_Y$ is the delta measure concentrated on the diagram $Y$, gives an isometric embedding of $S_{M,K}$ into $\PP(S_{M,K})$.


This section is organized as follows.  
First, we give the intuition for the PFM in Section \ref{Sect:Intuition}, then we discuss the precise definition in Section \ref{Sect:H-example}, and finally work out an example in Section \ref{Sect:H-example}.

\subsection{Intuition}
\label{Sect:Intuition}

We first give an intuitive description of the main ideas behind the new definition. 
The basic ideas we use to achieve continuity of a mean diagram is to think of diagrams as probabilistic objects and groupings between diagrams as probabilistic objects. 
We track the probabilities of different  groupings being optimal for of a set of diagrams drawn by perturbing the points in the original input diagrams. 
The mean of a set of diagrams is not a diagram but a distribution over diagrams which are each the minimizer for some grouping. 
The weight on a diagram is the probability that the corresponding grouping is optimal.

The Fr\'echet mean is generically unique. 
More precisely, the measure of sets of diagrams in $(S_{M,K})^N$ with non-unique Fr\'echet means is of measure zero \cite{Turner2011}. 
However, the Fr\'echet mean is not stable. 
To see this, consider a slight perturbation of the point configuration in Fig.~\ref{F:SquareExampleVineyard}A.
The result is a mean which contains exactly one of $D_{\blacktriangle}$ and $D{_\bigstar}$ since the perturbation will result in exactly one groupings to be optimal. 

To address this problem, we consider diagrams as probabilistic objects.
So, given a diagram with labeled off diagonal points $\{p_1,...,p_\ell\}$ we consider the collection as a probability density function on sets of labeled points $\{x_1, x_2, \ldots x_\ell\} \subset \R^2\cup \Delta$ where each $x_i$ is a perturbation of $p_i$ and can be either off the diagonal or a copy of the diagonal.  This is the "trembling hands" of the points considered as players in game theory; they sometimes play a slightly different strategy than the one they intended to play. 

We will use the density function for an individual perturbation to be a linear combination of a uniform distribution over a small ball centered at $p_i$ and the Dirac function over the diagonal. This is for ease of calculations and there is no theoretical restriction.
%

For example, consider Fig.~\ref{F:SquareExampleVineyard}A. After slightly perturbing the points in the diagrams, the probability of the two optimal matches is about equal. This will result in a mean diagram of 
$$p_{\blacktriangle} \cdot \delta_{D_{\blacktriangle}}+ p_{\bigstar} \delta_{D{_\bigstar}},$$
where $p_{\blacktriangle} = 1- p_{\bigstar} \approx .5.$

In general, if $X = \{X_1, \cdots, X_N\}$ is a set of diagrams from $S_{M,K}$, we define its mean to be the following distribution on $S_{M,NK}$:
\begin{defn}
The probabilistic \Frechet mean (PFM) for a set of diagrams $\{X_1,\cdots,X_n\}$ is the distribution given by
\begin{equation*}
 \mu_X = \sum_G \P(\HH_X = G) \cdot \delta_{\textrm{mean}_X(G)}.
\end{equation*}
\end{defn}

Here the sum is taken over all possible groupings $G$ on the set of diagrams, and $\textrm{mean}_X(G)$ is the mean diagram for the specific grouping $G$.
The weights $\P(\HH = G)$ are derived from a random variable $\HH$ which can be thought of either as a probabilistic grouping where each point in the input diagram is replaced with a localized distribution centered on the point or as the  probability that a stochastic perturbation of the input  diagrams would lead to $G$ being the optimal grouping.
We now explain this in more detail.

\subsection{The Definition of $\HH$} \label{match}
\label{Sect:DefnH}

\begin{figure}
 \centering
 \includegraphics[width = .48\textwidth]{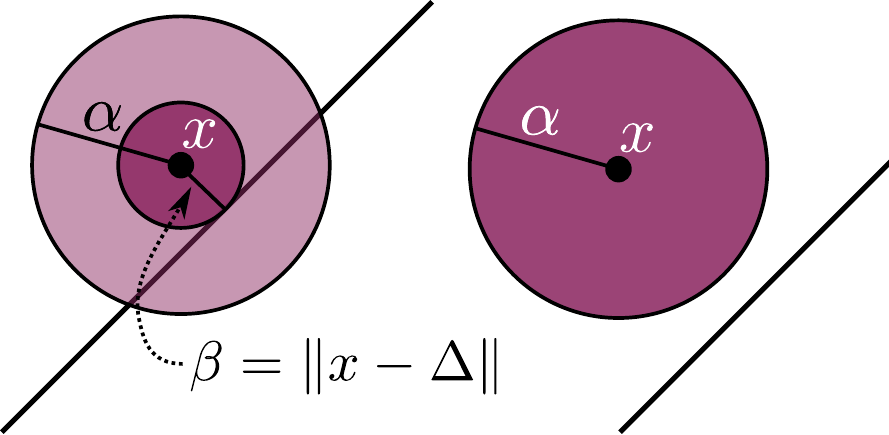}
 \caption[The method for drawing points.]{The method for drawing points.  
 For a point $x \in X_i$  where $\|x-\Delta\|\geq \alpha$ as at right, a point is drawn from the uniform distribution on the ball of radius $\alpha$ centered at $x$ 
 This point is then added to the diagram $X_i'$.  
 For a point $x \in X_i$ where $\|x-\Delta\|<\alpha$ as at left, a point is still drawn from ball of radius $\alpha$, however the point is only added to $X_i'$ if it is inside the ball of radius $\beta = \|x-\Delta\|$ centered at $x$.
 }
 \label{F: DrawNearDiagonal}
\end{figure}
We are given a set $X = \{X_1,\cdots,X_N\}$ of diagrams from $S_{M,K}$.  
We now define $\HH$, a grouping valued random variable.
As with other notation, we will write $\HH_X$ when it is necessary to distinguish the diagrams of interest.

First, choose parameter $\alpha>0$ which determines how much we perturb the points.
Label all the points within the $X_i$. When we perturb the points in the $X_i$ remember the labels. 
We need to define the distributions $\eta_{x}$ for each $x \in X_i$  which gives the method for perturbing the off-diagonal points of the persistence diagrams. 
This is based on the uniform distribution over the ball of radius $\alpha$, but must be modified if the initial point is within $\alpha$ of the diagonal.  
This can be thought of as instead adding a point on the diagonal if the drawn $x'$ is outside of $B(x,\|x-\Delta\|)$; however, since we only keep track of off-diagonal points for storing a persistence diagram,  this point can safely be ignored.
Note that if $x$ is more than $\alpha$ away from the diagonal, an off-diagonal point is always added to the diagram.  
However, the probability that an off-diagonal point gets added decreases as the distance to the diagonal decreases.
See Fig.~\ref{F: DrawNearDiagonal}.

Let $x \in X_i$ and set $r=\min\{\alpha, \|x-\Delta\|\}$. 
Define the density function on $\R^2\cup \Delta$ for $x$ as
\begin{align*}
\eta_{x}=\frac{1}{\pi \alpha^2}\mathbf{1}_{B(x,r)}+ \frac{\alpha^2-r^2}{\alpha^2}\delta_{\Delta}.
\end{align*}
If we wished, we could work with a more general distribution such as a normal distribution restricted to a disc $B(x,r)$; however, for clarity of the proof we will use the uniform distribution.


We now draw perturbations $X_1',\cdots,X_N'$ of original input diagrams $X_1, X_2, \cdots, X_N$ by drawing the $x'$ from the corresponding density functions $\eta_{x}$.
%
We are interested in the optimal groupings for these diagrams where we have kept track of the labeling. 
Since each point in a draw of $X_i'$ is associated to a point in $X_i$, we associate a grouping of the $\{X_i'\}$ with the grouping using the corresponding points of $X_i$.  
To make sure this is well-defined, whenever a copy of the diagonal is used in a selection within a grouping for $X'_1, X'_2, \ldots X'_N$ we assume that it came from an unlabeled copy of the diagonal  (not from an off-diagonal point in some $X_i$). 
Exactly the points in the $X_i$ which are sent to an off-diagonal point in $X'_i$ are contained in some selection.
Some points in the $X_i$ did not get corresponding off-diagonal points in the draw $X_i'$ and therefore are not represented in the selections. 
We can extend this set of selections to a full grouping by adding the trivial selection for these points. 
That is, if a point $x \in X_i$ did not lead to a off-diagonal point in $X_i'$, we add the selection which chooses $x$ for $X_i$ and $\Delta$ for every other diagram.

Note that the diagram $\mean_X(G)$, where $X\in (S_{M,K})^N$ can have at most $NK$ points and those points will be contained in a box of size $M$ (as their coordinates are an affine combination of values in $[0,M]$).
Thus, $\mu_X$ is an element of $\PP(S_{M,NK})$.

With probability one there is a unique optimal grouping for $X_1',\cdots,X_N'$ \cite{Turner2013}. 
This implies that we have constructed a grouping valued random element which we call $\HH$.

\subsection{Example} 
\label{Sect:H-example}

\begin{figure}
 \centering
 \begin{subfigure}[b]{.4\textwidth}
  \includegraphics[width = \textwidth]{PersistenceDiagram-Grouping}
  \caption{}
  \label{F:GroupingDraw1}
 \end{subfigure}
 \qquad
 \begin{subfigure}[b]{.4\textwidth}
  \includegraphics[width = \textwidth]{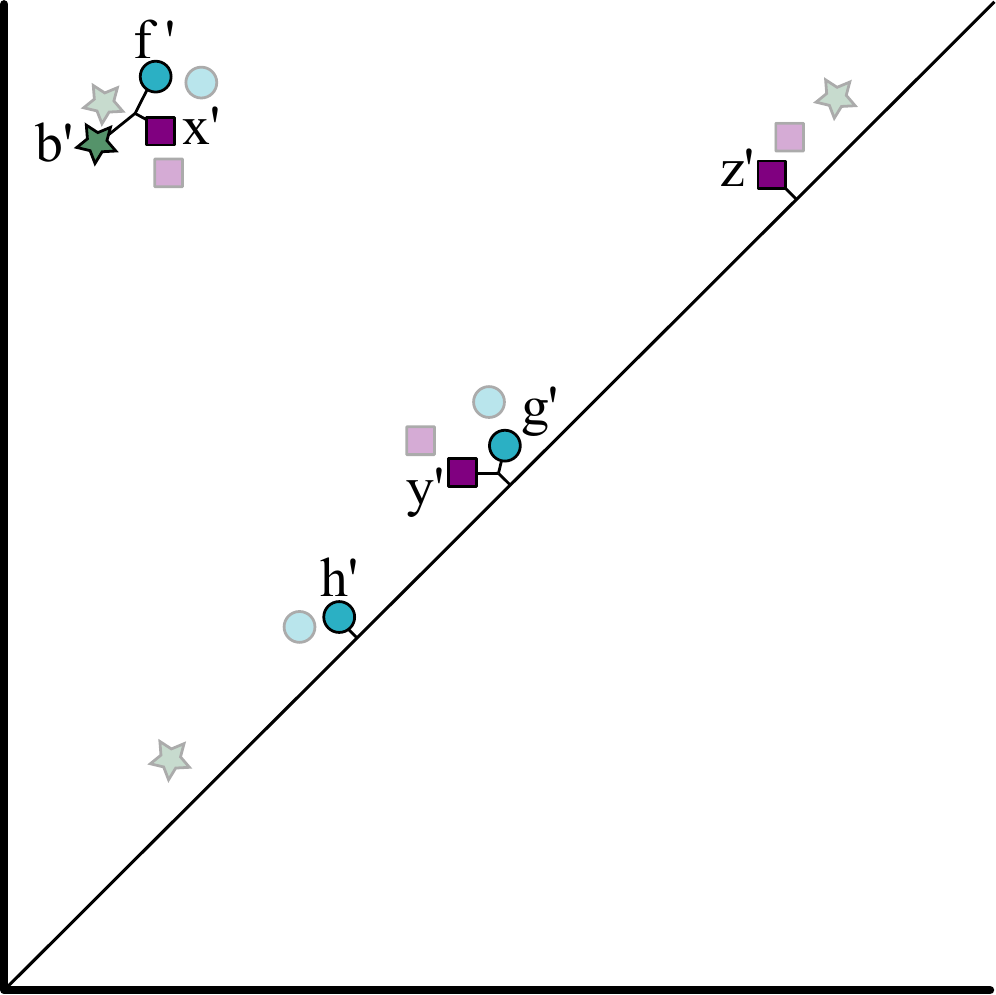}
  \caption{}
  \label{F:GroupingDraw2}
 \end{subfigure}
 \caption[An example of corresponding groupings for a given draw.]{An example of corresponding groupings for a given draw.  The original diagrams are $D_\bigstar$, $D_\Box$ and $D_\bigcirc$ in Fig.~(A). A point is drawn near each point away from the diagonal, and points are drawn for some points near the diagonal to construct $D_\bigstar'$, $D_\Box'$ and $D_\bigcirc'$ in Fig.~(B).  The grouping for the mean of these three diagrams is computed using Algorithm \ref{Alg:KatesMean} and the associated grouping is given in Eqn.~\ref{E:GroupingPerturb}. Then the grouping is converted to a grouping for $D_\bigstar$, $D_\Box$ and $D_\bigcirc$ in Eqn.~\ref{E:GroupingPerturb2} and drawn in Fig.~(A).}
\label{F:GroupingDraw}
\end{figure}

Here is an example to make the discussion above a little more clear. 
Consider the three overlaid diagrams in Fig.~\ref{F:GroupingDraw}A.  
Points are drawn in the ball of radius $\alpha$ centered at each point.
  Since $a,c, h, g,y,$ and $z$ are near the diagonal there is a chance that the diagonal is drawn for them.  
In this particular draw, given in Fig.~\ref{F:GroupingDraw}B, the diagonal is drawn for $a$ and $c$.  

For the diagrams in Fig.~\ref{F:GroupingDraw}B, the optimal grouping shown is 
\begin{equation}\label{E:GroupingPerturb}
\bordermatrix{
   & D_\bigstar' & D_\Box'  &D_\bigcirc' \cr
1  & b' &  x'  & f' \cr
2  & \Delta & y' & g' \cr
3  & \Delta & \Delta & h' \cr
4  & \Delta & z' & \Delta
                } .
\end{equation}
So, to find the corresponding grouping for the original diagrams, we replace each point with its corresponding point, and add in the trivial selection for the points that were not chosen:
\begin{equation}\label{E:GroupingPerturb2}
\bordermatrix{
   & D_\bigstar & D_\Box  &D_\bigcirc \cr
1  & b &  x  & f \cr
2  & \Delta & y & g \cr
3  & \Delta & \Delta & h \cr
4  & \Delta & z & \Delta \cr
5  & a & \Delta & \Delta \cr
6  & c & \Delta & \Delta
                } .
\end{equation}

%

\section{Continuity}
\label{sec:Theorems}

In this section, we prove our main theorem: that the mean distribution varies continuously when faced with a continuously varying set of input diagrams.
We first must give $(S_{M,K})^N$ and $\PP(S_{M,NK})$ metrics.
\begin{defn}
Let  $X = \{X_1,\cdots,X_N\}$ and $Y = \{Y_1,\cdots,Y_N\}$ be elements of $(S_{M,K})^N$.  
Then the space $(S_{M,K})^N$ is given the metric $\displaystyle{\overrightarrow{d_2}(X,Y) = \left(\sum_{i=1}^N W_2(X_i,Y_i)^2\right)^{1/2}}$. 
\end{defn}

\begin{defn}
The space $\PP(S_{M,NK})$ consists of  distributions with the Wasserstein metric $\WW_2$ of Def.~\ref{D:WassProbability}.  
\end{defn}

With these structures, we can state our main theorem.
\begin{thm}\label{Thm:Main}
Let $X = (X_1 ,\cdots , X_N )$ and $Y = (Y_1, \cdots  , Y_N )$ denote sets of diagrams in $(S_{M,K} )^N$ with PFMs $\mu_X$ and $\mu_Y$ respectively
and let $\phi_i:X_i \to Y_i$ for $i\in\{1,\cdots,N\}$ be any set of optimal matchings between the pairs.
Let $\tX_i$ be the diagram consisting of points $x \in X_i$ such that $\phi_i(x) = \Delta$. 
Likewise, let $\tY_i$ be the diagram consisting of points $y \in Y_i$ such that $\phi\inv(y) = \Delta$.
   
Then
\begin{align*}
 \WW_2(\mu_X , \mu_Y ) 
 & \leq  
 C\left(  \sum_{x \in \tX} \|x-\phi(x)\| + \overrightarrow d_2(X,Y)^2 \right)^{\tfrac{1}{2}}
\end{align*}
 where $C = \sqrt{\left( 2 \left(\tfrac{1}{N^2} + \tfrac{\overline{M}^2}{\alpha^2}\right)+  \tfrac{4\overline M^2}{\alpha}+ 1\right)} $.
\end{thm}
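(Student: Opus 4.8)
The plan is to prove Theorem~\ref{Thm:Main} by exhibiting one explicit coupling of $\mu_X$ and $\mu_Y$ and bounding its transport cost; since $\WW_2(\mu_X,\mu_Y)^2$ is an infimum over couplings, a bound for any single coupling suffices. The coupling I would use runs the ``trembling hand'' perturbation on $X$ and on $Y$ off a common source of randomness, tied together through the optimal matchings $\phi_i$. Concretely, for $x\in X_i$ with $\phi_i(x)$ an off-diagonal point of $Y_i$, draw from $\eta_x$ and push that draw to a draw from $\eta_{\phi_i(x)}$ as synchronously as possible: when both $x$ and $\phi_i(x)$ are farther than $\alpha$ from $\Delta$ the two densities are translates, so translation gives $x'-\phi_i(x)'=x-\phi_i(x)$ exactly; when one or both is within $\alpha$ of $\Delta$, use a maximal coupling of the two ``off-diagonal versus $\Delta$'' Bernoulli decisions, so that the probability the two draws disagree about landing on the diagonal is $O(\|x-\phi_i(x)\|/\alpha)$ and, conditioned on both surviving, $\|x'-\phi_i(x)'\|=O(\|x-\phi_i(x)\|)$. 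For $x\in\tX_i$ (and symmetrically $y\in\tY_i$) there is no partner, so $x'$ is drawn from $\eta_x$ on its own, and the probability it is nonetheless off-diagonal is at most $\|x-\Delta\|/\alpha=\|x-\phi_i(x)\|/\alpha$, directly from the definition of $\eta_x$ with $r=\min\{\alpha,\|x-\Delta\|\}$. This produces coupled perturbed sets $X',Y'$, hence (by a.s.\ uniqueness of optimal groupings \cite{Turner2013}) coupled random groupings $\HH_X,\HH_Y$ and coupled diagrams $\mean_X(\HH_X),\mean_Y(\HH_Y)$ whose joint law is a coupling of $\mu_X,\mu_Y$, so $\WW_2(\mu_X,\mu_Y)^2\le\E\big[W_2\big(\mean_X(\HH_X),\mean_Y(\HH_Y)\big)^2\big]$.

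Next I would introduce the good event $E$ on which (i) no matched pair $x\leftrightarrow\phi_i(x)$ disagrees about being off-diagonal and (ii) every point of $\tX$ and of $\tY$ is killed. On $E$ the surviving off-diagonal points of $X'$ and $Y'$ are in bijection through $\phi$, with matched points $O(\|x-\phi_i(x)\|)$ apart; the crux is to deduce that the unique optimal grouping $\HH_Y$ is then the image of $\HH_X$ under $\phi$, extended by trivial selections in the same way on both sides. Granting this, match $\mean_X(\HH_X)$ and $\mean_Y(\HH_Y)$ selection by selection; since the mean of a selection is an affine average of its off-diagonal entries (coefficients nonnegative, summing to one), $\|\mean_X(s)-\mean_Y(\phi(s))\|^2\le\sum_{x\in s}\|x-\phi(x)\|^2$ up to the combinatorial weighting, and summing over selections (each off-diagonal point lies in exactly one) gives $W_2(\mean_X(\HH_X),\mean_Y(\HH_Y))^2\le(\text{const})\cdot\overrightarrow{d_2}(X,Y)^2$ on $E$; this is the source of the $\tfrac1{N^2}$ and the additive $1$ pieces of $C$.

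On $E^{c}$ I would use only the crude bound $W_2(\mean_X(\HH_X),\mean_Y(\HH_Y))\le\operatorname{diam}(S_{M,NK})$, which is finite and of order $\overline{M}$ by the diameter estimate recorded earlier, so $\E[W_2^2;E^{c}]\le\operatorname{diam}(S_{M,NK})^2\,\P(E^{c})$. A union bound over the failure modes, with the per-point estimates from the coupling, gives $\P(E^{c})\le\frac1\alpha\big(\sum_{x\in\tX}\|x-\phi(x)\|+\sum_{y\in\tY}\|y-\phi\inv(y)\|\big)+\frac{c}{\alpha}\sum_{\text{matched, near }\Delta}\|x-\phi(x)\|$; the $\tX,\tY$ sums reproduce the first term of the theorem, and the matched near-diagonal sum is absorbed into an $\overrightarrow{d_2}(X,Y)^2$ term (each such distance being bounded) or into the $\overline{M}$-weighted constant. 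Multiplying by $\operatorname{diam}^2$ produces exactly the $\overline{M}^2/\alpha^2$ and $\overline{M}^2/\alpha$ contributions to $C$. Adding the $E$ and $E^{c}$ estimates and taking square roots yields the stated inequality with the claimed $C$.

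I expect the main obstacle to be the stability claim in the second paragraph: that on $E$ the optimal grouping transfers across $\phi$. A priori this is precisely the instability the paper warns about, since optimal groupings can jump under arbitrarily small changes, so the argument must use that on $E$ the two perturbed configurations are not merely close but isometric point-for-point via $\phi$, together with a.s.\ uniqueness of the optimizer, to force the optimal grouping to be $\phi$-equivariant. A secondary subtlety is the points of $\tX$ lying farther than $\alpha$ from $\Delta$: these survive with probability one and so can never meet condition (ii); they are handled either by folding them into $E^{c}$, where each contributes only $1\le\|x-\Delta\|/\alpha$ to $\P(E^{c})$ and is therefore absorbed, or by carrying them along as genuine extra points whose selections perturb the mean diagram by at most their own magnitude.
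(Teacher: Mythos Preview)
Your overall architecture—couple the two trembling-hand procedures off common randomness, split into a good event $E$ and its complement, pay $\operatorname{diam}^2\cdot\P(E^{c})$ on the bad event—is sound and is in fact close to how the paper is organized. The gap is exactly where you locate it, and your proposed resolution of it is incorrect.

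With the translation coupling you describe, on $E$ the surviving points satisfy $\phi_i(x)'=x'+(\phi_i(x)-x)$. The displacement vectors $\phi_i(x)-x$ differ from point to point, so the map $x'\mapsto\phi_i(x)'$ is \emph{not} an isometry of the perturbed configurations; $X'$ and $Y'$ are merely close, not congruent. Consequently there is no reason the unique optimal grouping of $Y'$ should be the $\phi$-image of that of $X'$: this is precisely the instability of Fig.~\ref{F:SquareExampleVineyard}, and it occurs on a set of positive probability \emph{inside} your good event $E$. Once $\HH_Y\neq\phi(\HH_X)$ on part of $E$, you can no longer match $\mean_X(\HH_X)$ and $\mean_Y(\HH_Y)$ selection-by-selection, and the ``on $E$'' bound collapses.

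The paper avoids this not by a cleverer stability argument but by choosing a different coupling. In coupling language, Lemma~\ref{lem:massbound} and Prop.~\ref{prop:meandistbound} amount to using the \emph{maximal} coupling of $\eta_x$ and $\eta_{\phi(x)}$ for each matched pair (total-variation coupling of the perturbation laws $f_X,f_Y$), rather than the translation coupling. On the agree event one then has $x'=\phi(x)'$ literally, hence $X'=Y'$ as $\phi$-labeled configurations, and $\HH_Y=\phi(\HH_X)$ holds trivially; the residual cost on that event is $W_2(\mean_X G,\mean_Y\phi(G))\le\overrightarrow{d_2}(X,Y)$ from Lemma~\ref{lem:massbound1}, computed with the \emph{original} points. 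The disagree probability per pair is at most $\tfrac{4}{\alpha}\|x-\phi(x)\|$, which combined with the diameter bound yields the $\tfrac{4\overline{M}^2}{\alpha}$ term. The unmatched points in $\tX,\tY$ are handled separately (Prop.~\ref{prop:losetodiag}), where the survive-probability is $r_x^2/\alpha^2\le\|x-\Delta\|^2/\alpha^2$, producing the $\overline{M}^2/\alpha^2$ term, and the $1/N^2$ term comes from Lemma~\ref{Lemma:MeanOfTrivial}. If you swap your translation coupling for this maximal coupling, your one-shot argument goes through and reproduces the paper's estimate; as written, the translation coupling cannot deliver the grouping equivariance you need.
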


Before delving into the proof, we note that there are two major results that are now corollaries to this theorem.  
First, we have that the map which takes a set of diagrams to its PFM is \Holder continuous.
\begin{cor}
\label{Cor:MapCont}
 Let 
 \begin{center}
 \begin{tabular}{rccc}
  $\Phi:$& $(S_{M,K})^N$  &$\longrightarrow$&  $\PP(S_{M,NK})$\\
        & $(X_1,\cdots,X_N)$ &$\longmapsto$ & $\mu_{X}$
 \end{tabular}
\end{center}
be the map which sends a set of diagrams to its PFM.
Then $\Phi$ is \Holder continuous with exponent $1/2$
and constant $C' = \max\{ C\sqrt{NK+1}, \overline M \}$.
That is, 
\begin{equation*}
\WW_2(\mu_X,\mu_Y)\leq C'\sqrt{{\overrightarrow{d_2}(X,Y)}}
\end{equation*}
for all $X,Y \in (S_{M,K})^N$.

\end{cor}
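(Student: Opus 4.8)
The plan is to derive Corollary~\ref{Cor:MapCont} directly from Theorem~\ref{Thm:Main} by bounding the expression inside the square root in the theorem by a constant multiple of $\overrightarrow{d_2}(X,Y)$, while being careful about the regime where $\overrightarrow{d_2}(X,Y)$ is large (where the $\sqrt{\cdot}$ would otherwise be too small to absorb the diameter of $\PP(S_{M,NK})$). First I would fix optimal matchings $\phi_i : X_i \to Y_i$ realizing $W_2(X_i,Y_i)$ and set $\tX_i$, $\tY_i$ as in the statement of the theorem. The key observation is that each point $x \in \tX_i$ contributes $\|x - \phi_i(x)\|^2 = \|x-\Delta\|^2$ to $W_2(X_i,Y_i)^2$, so $\sum_{x \in \tX} \|x-\phi(x)\|^2 \le \overrightarrow{d_2}(X,Y)^2$. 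Since each $\tX_i$ has at most $K$ points and there are $N$ diagrams, $\tX$ has at most $NK$ points, so by Cauchy--Schwarz (or the power-mean inequality relating $\ell_1$ and $\ell_2$ norms on a vector of length $\le NK$),
\[
\sum_{x \in \tX} \|x-\phi(x)\| \;\le\; \sqrt{NK}\,\Bigl(\sum_{x \in \tX}\|x-\phi(x)\|^2\Bigr)^{1/2} \;\le\; \sqrt{NK}\;\overrightarrow{d_2}(X,Y).
\]

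Next I would split into two cases according to the size of $\overrightarrow{d_2}(X,Y)$. If $\overrightarrow{d_2}(X,Y) \le 1$, then $\overrightarrow{d_2}(X,Y)^2 \le \overrightarrow{d_2}(X,Y)$, so the bracket in Theorem~\ref{Thm:Main} is at most $(\sqrt{NK}+1)\,\overrightarrow{d_2}(X,Y) \le (NK+1)\,\overrightarrow{d_2}(X,Y)$, and taking square roots gives
\[
\WW_2(\mu_X,\mu_Y) \;\le\; C\sqrt{NK+1}\,\sqrt{\overrightarrow{d_2}(X,Y)}.
\]
If instead $\overrightarrow{d_2}(X,Y) > 1$, then $\sqrt{\overrightarrow{d_2}(X,Y)} > 1$, so it suffices to bound $\WW_2(\mu_X,\mu_Y)$ by $\overline{M}$. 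For this I would use that $\mu_X$ and $\mu_Y$ are both supported on $S_{M,NK}$, whose points all lie in the box $[0,M]^2$; more precisely, every diagram $\mean_X(G)$ has points that are affine (indeed convex) combinations of coordinates in $[0,M]$, so each such diagram is within distance $\overline M$ (the notation used in the theorem for the diameter-type quantity associated to this box) of, say, the empty diagram or of any fixed reference diagram in the common support. Pairing $\mu_X$ and $\mu_Y$ through that common reference point gives $\WW_2(\mu_X,\mu_Y) \le \overline M \le \overline M \sqrt{\overrightarrow{d_2}(X,Y)}$. Combining the two cases yields the claimed bound with $C' = \max\{C\sqrt{NK+1},\,\overline M\}$.

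The only genuine subtlety — and the step I would be most careful about — is the large-distance case: one must pin down exactly which ``diameter'' constant controls $\WW_2$ on $\PP(S_{M,NK})$. The coarse bound from the text gives diameter at most $\sqrt{2}\,NK\,M$ on $S_{M,NK}$ itself, which would be more than enough, but the corollary as stated uses $\overline M$, so I would need to confirm that $\overline M$ is defined (earlier or implicitly) as an upper bound for $W_2(Z,D_\emptyset)$ over $Z$ in the common support, or more plausibly that $\overline M$ denotes $\sqrt{2}\,M$ or a similar per-point quantity and the relevant support here consists of diagrams already matched so that only the diagonal-bounded discrepancy matters; in any case the argument is robust to the precise constant. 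Everything else is a routine chain of elementary inequalities (Cauchy--Schwarz, $t^2 \le t$ for $t \le 1$, monotonicity of $\sqrt{\cdot}$) applied to the conclusion of Theorem~\ref{Thm:Main}.
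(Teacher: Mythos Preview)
Your proposal is correct and follows essentially the same two-case argument as the paper: bound the sum in Theorem~\ref{Thm:Main}, use $\overrightarrow{d_2}(X,Y)^2 \le \overrightarrow{d_2}(X,Y)$ when $\overrightarrow{d_2}(X,Y)\le 1$, and fall back on the diameter bound $\WW_2(\mu_X,\mu_Y)\le \overline M$ when $\overrightarrow{d_2}(X,Y)>1$. The only cosmetic difference is that the paper bounds each term of $\sum_{x\in\tX}\|x-\phi(x)\|$ individually by $\overrightarrow{d_2}(X,Y)$ to get $NK\,\overrightarrow{d_2}(X,Y)$, whereas you use Cauchy--Schwarz to get the sharper $\sqrt{NK}\,\overrightarrow{d_2}(X,Y)$ before relaxing to the same constant; and regarding your caveat, $\overline M$ is indeed defined in the paper (in Propositions~\ref{prop:meandistbound} and~\ref{prop:losetodiag}) as the maximum $W_2$-distance between diagrams in $S_{M,NK}$, so the large-distance case goes through exactly as you wrote.
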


\begin{proof}
 First, notice that for any $x \in \tX_i$, $\|x-\phi(x)\| \leq W_2(X_i,Y_i) \leq \overrightarrow{d_2}(X,Y)$ and the number of off-diagonal points in $\tX$ is at most $NK$.
 Thus, if $\overrightarrow{d_2}(X,Y) <1$, the theorem implies that 
 \begin{align*}
  \WW_2(\mu_X,\mu_Y) 
    & \leq C \left(NK\overrightarrow{d_2}(X,Y) + \overrightarrow{d_2}(X,Y)^2   \right)^{1/2}\\
    & \leq C(NK+1)^{1/2} \sqrt{\overrightarrow{d_2}(X,Y)}.
 \end{align*}
 If $\overrightarrow{d_2}(X,Y) \geq 1$, then we note that since $\WW_2(\mu_X,\mu_Y) \leq \overline M$,  $\WW_2(\mu_X,\mu_Y) \leq \overline M \sqrt{\overrightarrow{d_2}(X,Y)}$ trivially.

\end{proof}

Secondly, if we define a mean pointwise for a set of vineyards, we can stitch the results together into a continuous path in $\PP(S_{M,NK})$.
This result is immediate given the previous corollary.
\begin{cor}
\label{Cor:VineCont}
 Let $\gamma_1,\cdots,\gamma_N:[0,1]\to S_{M,K}$ be vineyards in $\VV_2$ restricted to $S_{M,K}$.
 Then 
 \begin{equation*}
  \begin{array}{rccc}
   \mu_\gamma:&[0,1] &\longrightarrow& \PP(S_{M,NK})\\
   & t & \longmapsto & \mu_{\gamma_1(t),\cdots,\gamma_N(t)}   
  \end{array}
 \end{equation*}
 is continuous.
\end{cor}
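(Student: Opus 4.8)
The plan is to deduce the statement directly from Corollary~\ref{Cor:MapCont} by exhibiting $\mu_\gamma$ as a composition. I would set $\Gamma : [0,1] \to (S_{M,K})^N$, $\Gamma(t) = (\gamma_1(t),\dots,\gamma_N(t))$, so that $\mu_\gamma = \Phi \circ \Gamma$, where $\Phi$ is the PFM map. Since a composition of continuous maps is continuous, and since Corollary~\ref{Cor:MapCont} already gives that $\Phi$ is \Holder continuous (hence continuous) from $\big((S_{M,K})^N,\overrightarrow{d_2}\big)$ to $\big(\PP(S_{M,NK}),\WW_2\big)$, the only thing left to check is that $\Gamma$ is continuous as a map into $\big((S_{M,K})^N,\overrightarrow{d_2}\big)$.

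For that step I would invoke the standard fact that an $\ell^2$-type product metric induces the product topology. Fix $t \in [0,1]$ and $\varepsilon > 0$. Each $\gamma_i$ is, by hypothesis, continuous with respect to $W_2$, so I can choose $\delta_i > 0$ with $W_2(\gamma_i(s),\gamma_i(t)) < \varepsilon/\sqrt{N}$ whenever $|s-t| < \delta_i$; taking $\delta = \min_{1\le i\le N}\delta_i$ then gives $\overrightarrow{d_2}(\Gamma(s),\Gamma(t)) = \big(\sum_{i=1}^N W_2(\gamma_i(s),\gamma_i(t))^2\big)^{1/2} < \varepsilon$ for $|s-t| < \delta$, so $\Gamma$ is continuous. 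Composing with the \Holder estimate, $\WW_2(\mu_{\gamma(s)},\mu_{\gamma(t)}) \le C'\sqrt{\overrightarrow{d_2}(\Gamma(s),\Gamma(t))}$, so choosing $\delta$ small enough that $\overrightarrow{d_2}(\Gamma(s),\Gamma(t)) < (\varepsilon/C')^2$ finishes it. Since $[0,1]$ is compact, each $\gamma_i$ is in fact uniformly continuous and $\Phi$ is uniformly (\Holder) continuous, so one obtains uniform continuity of $\mu_\gamma$ at no extra cost.

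I do not expect any genuine obstacle here: all of the analytic difficulty has already been packaged into Theorem~\ref{Thm:Main} and Corollary~\ref{Cor:MapCont}. The one point that needs a line of care — and the closest thing to an ``obstacle'' — is verifying that coordinatewise $W_2$-continuity of the $\gamma_i$ forces joint continuity of $\Gamma$ in the metric $\overrightarrow{d_2}$; but this is immediate from the displayed inequality, so the corollary is essentially a formal consequence of what precedes it.
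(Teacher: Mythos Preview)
Your proposal is correct and matches the paper's approach exactly: the paper simply states that this corollary ``is immediate given the previous corollary'' (Corollary~\ref{Cor:MapCont}), and your argument spelling out $\mu_\gamma = \Phi\circ\Gamma$ with $\Gamma$ continuous in $\overrightarrow{d_2}$ is precisely the intended one-line justification made explicit.
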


%
%

\section{Proof of Theorem~\ref{Thm:Main}}

The proof of Thm.~\ref{Thm:Main} is  organized as follows.  
We start by discussing means of selections in Section \ref{S:MeansSelections}.
Then, in order to prove the theorem, we need to match groupings for $X$ to groupings for $Y$.  
 We do this in Section \ref{S:ProofA}  by first working with those off-diagonal points in $X$ which are close to off-diagonal points in $Y$ where there is an obvious association between groupings in $X$ and groupings in $Y$.  
Then, the issue of points matched to the diagonal is discussed in Section \ref{S:ProofB}, and finally, the full proof is given in Section \ref{S:ProofC}.

\subsection{Means of selections}
\label{S:MeansSelections}
Consider the mean of the selection $s$ consisting of $N$ points: $\{p_1,\cdots,p_k\}$ with $p_i = (x_i,y_i)$ off-diagonal, and $N-k$ copies of the diagonal $\Delta$.
A quick computation gives this point as 
\begin{align} \label{E: Mean of a match}
 \textrm{mean}_X(s) = \frac{1}{2Nk}\Big(  &(N+k) \sum_i x_i + (N-k) \sum_i y_i,\\
	      &(N-k) \sum_i x_i + (N+k) \sum_i y_i\Big). \notag    
\end{align}

Sometimes it may be simpler to consider the mean of two selections  in rotated coordinates with axes $(1/\sqrt{2},1/\sqrt{2})$ and $(-1/\sqrt{2},1/\sqrt{2})$.
Writing $p_i = (a_i,b_i)$ in these coordinates, Eqn.~\ref{E: Mean of a match} becomes  
\begin{align*}
 \textrm{mean}_X(s) &= \left( \frac{1}{k} \sum_{i=1}^k a_i,\,\,\, \frac{1}{N} \sum_{i=1}^k b_i\right).
\end{align*}
In these coordinates, it is easier to see what happens for the mean of a trivial selection.  
\begin{lemma}
\label{Lemma:MeanOfTrivial}
 For any off-diagonal point $x$, $\|\mean(s_x)-\Delta\| = \tfrac{1}{N}\|x-\Delta\|$.
\end{lemma}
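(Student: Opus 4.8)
The plan is to compute $\mean(s_x)$ directly from the rotated-coordinate formula just derived and read off the distance to the diagonal. Recall that the trivial selection $s_x$ for an off-diagonal point $x$ consists of the single off-diagonal point $x$ together with $N-1$ copies of $\Delta$, so in the notation of the preceding display we have $k=1$ and the one off-diagonal point is $p_1 = x$. Writing $x = (a,b)$ in the rotated coordinates with axes $(1/\sqrt2,1/\sqrt2)$ and $(-1/\sqrt2,1/\sqrt2)$, the formula $\mean_X(s) = \left(\tfrac1k\sum_{i=1}^k a_i,\ \tfrac1N\sum_{i=1}^k b_i\right)$ specializes to $\mean(s_x) = \left(a,\ \tfrac{b}{N}\right)$.

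Next I would identify which of the two rotated coordinates measures distance to the diagonal. The diagonal $\Delta = \{(x,x)\}$ is exactly the line spanned by $(1/\sqrt2,1/\sqrt2)$, i.e.\ the first rotated axis; hence the second rotated coordinate is (up to sign) the signed distance to $\Delta$. So for a point with rotated coordinates $(a,b)$ we have $\|(a,b) - \Delta\| = |b|$. Applying this to $x$ gives $\|x-\Delta\| = |b|$, and applying it to $\mean(s_x) = (a, b/N)$ gives $\|\mean(s_x)-\Delta\| = |b|/N = \tfrac1N\|x-\Delta\|$, which is the claim.

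There is essentially no obstacle here; the only things to be careful about are (i) confirming that the rotated-coordinate mean formula applies with $N$ the total number of entries in the selection (including the diagonal copies) while $k=1$ counts only the genuine off-diagonal point, and (ii) noting that the nearest point of $\Delta$ to a point with rotated coordinates $(a,b)$ is $(a,0)$, so the distance is indeed $|b|$ and the orthogonal projection argument is valid. Both are immediate from the setup, so the proof is a one-line computation once the coordinates are in place.
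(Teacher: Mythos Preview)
Your proposal is correct and is essentially the same argument as the paper's: both pass to the rotated coordinates, specialize the mean formula with $k=1$ to obtain $\mean(s_x)=(a,b/N)$, and then read off the distance to $\Delta$ as the second rotated coordinate. The only cosmetic difference is that you write $|b|$ where the paper writes $b$ (harmless, since off-diagonal points lie above $\Delta$).
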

\begin{proof}
If the single off-diagonal point is at $x=(a,b)$ in the rotated coordinates and there are a total of $N$ diagrams, 
\begin{equation*}
 \textrm{mean}_X(s_x) = \left(a, \frac{1}{N}b \right)
\end{equation*}
so the distance to the diagonal is minimized at the point $(a,0)$, again in rotated coordinates.
Thus $\|x-\Delta\| = b$ and hence
\begin{equation*}
\|\mean(s_x)-\Delta\|^2 = (\tfrac{1}{N}b)^2 = \tfrac{1}{N^2} b^2 = \tfrac{1}{N^2}\|x-\Delta\|^2.
\end{equation*}

\end{proof}

To conclude the section, we show that the distance between points which are the means of selections is bounded by the distance between the points which build the selections.
\begin{lemma}\label{Lemma:CloseMatchings}
 Let $s$ be a selection of points $z_1, z_2,\cdots,z_N$ and $\hat s$ a selection of points $\hat z_1,\hat z_2,\cdots,\hat z_N$.
 Further, assume $z_1,\cdots,z_k$ and $\hat z_1, \cdots \hat z_k$ are all off-diagonal for $1 \leq k \leq N$ and 
 $z_{k+1} = \cdots = z_{N} = \hat z_{k+1} = \cdots =\hat z_N = \Delta$.
 Then
 \begin{equation*}
  \|\textrm{mean}(s)-\textrm{mean}(\hat s)\|^2 \leq \sum_{i=1}^{k}\|z_i-\hat z_i\|^2.
 \end{equation*}
\end{lemma}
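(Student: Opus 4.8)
The plan is to reduce the statement to the classical fact that, in Euclidean space, the barycenter map is $1$-Lipschitz when the averaging weights are fixed and we measure perturbations of the input points in an $\ell^2$ sense. Using the rotated coordinates introduced just above the lemma, write $z_i = (a_i, b_i)$ and $\hat z_i = (\hat a_i, \hat b_i)$ for $i = 1, \dots, k$, and recall that in these coordinates $\textrm{mean}(s) = \big( \tfrac1k \sum_{i=1}^k a_i, \tfrac1N \sum_{i=1}^k b_i \big)$ and similarly for $\hat s$, since both selections have exactly $k$ off-diagonal points and $N-k$ diagonal points. Then
\begin{align*}
 \|\textrm{mean}(s) - \textrm{mean}(\hat s)\|^2
 &= \Big( \tfrac1k \sum_{i=1}^k (a_i - \hat a_i) \Big)^2 + \Big( \tfrac1N \sum_{i=1}^k (b_i - \hat b_i) \Big)^2.
\end{align*}

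The next step is to bound each term. For the first term, by Cauchy--Schwarz $\big( \sum_{i=1}^k (a_i - \hat a_i) \big)^2 \le k \sum_{i=1}^k (a_i - \hat a_i)^2$, so $\big( \tfrac1k \sum (a_i - \hat a_i) \big)^2 \le \tfrac1k \sum_{i=1}^k (a_i - \hat a_i)^2$. For the second term, since $k \le N$ we have $\tfrac1{N^2} \le \tfrac1{kN} \le \tfrac1{k}$ on the outside as well, but more carefully: $\big( \tfrac1N \sum (b_i - \hat b_i) \big)^2 \le \tfrac{k}{N^2} \sum_{i=1}^k (b_i - \hat b_i)^2 \le \tfrac1k \sum_{i=1}^k (b_i - \hat b_i)^2$ using $k/N^2 \le 1/k$ (equivalently $k^2 \le N^2$). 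Adding, and using $\tfrac1k \le 1$,
\begin{align*}
 \|\textrm{mean}(s) - \textrm{mean}(\hat s)\|^2
 &\le \tfrac1k \sum_{i=1}^k \big( (a_i - \hat a_i)^2 + (b_i - \hat b_i)^2 \big)
 = \tfrac1k \sum_{i=1}^k \|z_i - \hat z_i\|^2 \le \sum_{i=1}^k \|z_i - \hat z_i\|^2,
\end{align*}
where the last-but-one equality holds because rotation is an isometry, so the coordinate differences have the same Euclidean norm as $\|z_i - \hat z_i\|$.

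I do not expect a genuine obstacle here; the content is elementary. The only point requiring a little care is keeping the weights consistent: both selections must be assumed to have the \emph{same} number $k$ of off-diagonal entries, which is exactly what the hypothesis $z_{k+1} = \cdots = z_N = \hat z_{k+1} = \cdots = \hat z_N = \Delta$ guarantees, and this is what makes the mean a fixed convex combination (weights $1/k$ and $1/N$ in the two rotated axes) rather than a combination with differing denominators. A secondary bookkeeping point is the inequality $k/N^2 \le 1/k$, valid since $1 \le k \le N$; if one prefers a cleaner argument one can simply bound both coordinate weights by $1/k$ from the start (noting $1/N \le 1/k$ coordinatewise before squaring is not quite right, so the Cauchy--Schwarz route above is the clean one). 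Alternatively, one avoids rotated coordinates altogether and argues directly from Eqn.~\eqref{E: Mean of a match}, but the rotated picture makes the weights transparent and is the shortest route.
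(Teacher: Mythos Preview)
Your proof is correct and follows essentially the same route as the paper: rotated coordinates, Cauchy--Schwarz on each coordinate, then the trivial bound $1/k \le 1$ (the paper bounds the second coordinate by $\tfrac{1}{N}\sum (v_i-\hat v_i)^2$ via $k/N^2 \le 1/N$ rather than your $k/N^2 \le 1/k$, but this is an inessential difference). Your commentary about the barycenter being $1$-Lipschitz and the need for the two selections to share the same $k$ is exactly the right framing.
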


\begin{proof}
Consider the means of the selections $s$ and $\hat{s}$ in rotated coordinates with axes $(1/\sqrt{2},1/\sqrt{2})$ and $(-1/\sqrt{2},1/\sqrt{2})$.
Writing $z_{i} = (u_i,v_i)$ and  $\hat z_{i} = (\hat u_i,\hat v_i)$ in these coordinates, we have  
\begin{align*}
 \textrm{mean}_{Z}(s) &= \left( \frac{1}{k} \left(\sum_{i=1}^k u_i\right), \frac{1}{N} \left(\sum_{i=1}^k v_i\right)\right),\\
 \textrm{mean}_{\hat{Z}}(\hat{s}) &= \left( \frac{1}{k} \left(\sum_{i=1}^k \hat u_i\right), \frac{1}{N} \left(\sum_{i=1}^k \hat v_i\right)\right),
\end{align*}
hence, using Cauchy-Schwartz,
\begin{align*}
\|\mean_{Z}(s)-\mean_{\hat{Z}}(\hat s)\|^2 
& =\left( \frac{1}{k}\sum_{i=1}^k (u_i-\hat u_i) \right)^2 + \left( \frac{1}{N}\sum_{i=1}^k (v_i-\hat v_i) \right)^2\\
& \leq \frac{1}{k} \sum_{i=1}^k (u_i-\hat u_i) ^2 +  \frac{1}{N}\sum_{i=1}^k (v_i-\hat v_i)^2\\
& \leq  \sum_{i=1}^k (u_i-\hat u_i) ^2 + \sum_{i=1}^k (v_i-\hat v_i)^2\\
& = \sum_{i=1}^k \|z_i-\hat z_i\|^2.
\end{align*}
\end{proof}


\subsection{Proof for off-diagonal points}
\label{S:ProofA}

In this section, our goal is Prop.~\ref{prop:meandistbound} where we bound the distance between the PFM of diagrams where we do not have to worry about the issue of points close to the diagonal.  
 Let $X=\{X_1, X_2, \ldots, X_N\}$ and $Y =\{Y_1, Y_2, Y_3, \ldots Y_N\}$ be in $(S_{M,K})^N$ where $\GG(X)$ and $\GG(Y)$ are the sets of possible groupings for $X$ and $Y$ respectively. 
For the sake of notation, let $\overline X = \bigcup_i X_i$ and  $\overline Y = \bigcup_i Y_i$ be the set of all off-diagonal points in the input sets

Let $\phi_i:X_i \to Y_i$ be optimal matching and consider the case where $X_i$ and $Y_i$ have the same number of off-diagonal points and that $\phi_i$ maps off-diagonal points to off-diagonal points. 
By remembering the labeling and using the fact that $\phi$ matches off-diagonal points to off-diagonal points, $\phi$ induces a bijection  $\phi: \GG(X) \to \GG(Y)$ which we can use to associate the probability masses in order to construct a transportation plan for Prop.~\ref{prop:meandistbound}.


First, we bound the difference in probability for associated groupings under $\phi$.
\begin{lemma}\label{lem:massbound}
Let $X=\{X_1, X_2, \ldots, X_N\}$ and $Y =\{Y_1, Y_2, Y_3, \ldots Y_N\}$ be in $(S_{M,K})^N$. 
Let $\phi_i:X_i \to Y_i$ be optimal matchings, which can be thought of as $\phi:\overline X \to \overline Y$. 
Let us assume that $X_i$ and $Y_i$ have the same number of off-diagonal points and that $\phi_i$ maps off-diagonal points to off-diagonal points. 
Then
\begin{equation*}
\sum_{G \in \GG(X)} \max\big\{ \P(\HH_X=G) - \P(\HH_Y=\phi(G)),\,\, 0 \big\}
\leq \frac{{4}}{\alpha}\sum_{x\in \overline X} \|x-\phi(x)\|.
\end{equation*}
\end{lemma}

\begin{proof}

The proof is immediate whenever $ \|x-\phi(x)\|\geq \alpha$ for some $x\in X$ as 
\begin{equation*}
\sum_{G \in \GG(X)} \max\big \{\P(\HH_X=G) - \P(\HH_Y=\phi(G)), \,\,0\big\}
\leq \sum_{G \in \GG(X)} \P(\HH_X=G) \leq 1. 
\end{equation*}
Thus, assume that $ \|x-\phi(x)\|<\alpha$ for all $x\in X$.

For inputs $X= \{X_1, X_2, \cdots X_N\}$ and $Y=\{Y_1, Y_2 \ldots Y_N\}$, we have probability density functions $f_X$ and $f_Y$ for the population of $N$ diagrams drawn by suitably perturbing the points within them. 
The random grouping valued element is determined by the populations of diagrams drawn by $f_X$ and $f_Y$. 
Thus, to bound $\displaystyle{\sum_{G \in \GG(X)} \max\{\P(\HH_X=G) - \P(\HH_Y=\phi(G)), 0\}}$ it is enough to bound 
\begin{equation*}
\int_{S_{M+\alpha,NK}^N} (f_X-f_Y)_+\, d\rho.
\end{equation*}

Let $m$ be the total number of points in the diagrams $X_i$, so $m = \#\{\overline X\}$. 
Construct a sequence of populations of diagrams $X=Z^0, Z^1, \ldots, Z^m=Y$ where at each stage, a point $x$ is moved to $\phi(x)$ while fixing the rest of the points. 
Let $f_k$ denote the probability density function over $S_{M+\alpha,NK}^N$ corresponding to $Z^k$.
Since
\begin{align}\label{eq:densitybound1}
\int_{S_{M,K}^N} (f_X-f_Y)_+\, d\rho &\leq \int_{S_{M,K}^N} \Big|f_X-f_Y\Big|\, d\rho
\leq \sum_{k=1}^m \int_{S_{M+\alpha,NK}^N} \Big|f_{k-1} -f_k\Big|\,d\rho,
\end{align}
we wish to bound $\int |f_{k-1} -f_k|\,d\rho$ in terms of $\|x-\phi(x)\|$ where $x$ is the point that was moved between $Z^{k-1}$ and $Z^k$. 

Let $r_x=\min\{\alpha, \|x-\Delta\|\}$.
As the perturbations of the points in the diagrams are independent we can integrate out the effects of the other points in the diagrams.
Therefore,
\begin{align*}\label{eq:bound1}
\int_{S_{M+\alpha,NK}^N} |f_{k-1} -f_k|\,d\rho&= \int_{\R^2 \cup \Delta} |\eta_{x}-\eta_{\phi(x)}| d\rho\\
&\leq \int_{\R^2} \left|\frac{1}{\pi \alpha^2}\mathbf{1}_{B(x,r_x)} - \frac{1}{\pi \alpha^2}
\mathbf{1}_{B(x_{\phi(x)},r_{\phi(x)})} \right| \, d\rho 
+ \left|\frac{\alpha^2-r_x^2}{\alpha^2}-\frac{\alpha^2-r_{\phi(x)}^2}{\alpha^2}\right| \\
&\leq \frac{1}{\pi \alpha^2} \int_{\R^2}  \mathbf{1}_{B(x,r_x)\triangle B(x_{\phi(x)},r_{\phi(x)})}\, d\rho +  \frac{|r_x^2-r_{\phi(x)}^2|}{\alpha^2}
\end{align*}
 where $U\triangle V$ denotes the symmetric difference of $U$ and $V$.

Now, $B(x,r_x)\cap B(x_{\phi(x)},r_{\phi(x)})$ contains a ball with diameter $r_x+r_{\phi(x)}-\|x-\phi(x)\|$.
Note also that $r_x, r_{\phi(x)}\leq \alpha$, and $|r_x-r_{\phi(x)}| \leq \|x-\phi(x)\|$.
Then 
\begin{align*}
\int_{\R^2}  \mathbf{1}_{B(x,r_x)\triangle B(x_{\phi(x)},r_{\phi(x)})}\, d\rho 
&\leq \pi r_x^2 + \pi r_{\phi(x)}^2 
  - 2\pi \Big(\tfrac{1}{2}(r_x+r_{\phi(x)}-\|x-\phi(x)\|)\Big)^2\\
&=\tfrac{\pi}{2}\big(
(r_x -r_{\phi(x)})^2 +\|x-\phi(x)\| ({2}r_x+{2}r_{\phi(x)}-\|x-\phi(x)\|) 
\big)\\
&\leq \tfrac{\pi}{2} \big(
\|x-\phi(x)\|^2 + \|x-\phi(x)\| (4\alpha -\|x-\phi(x)\|
)\big)\\
&\leq 2\pi\alpha\|x-\phi(x)\| 
\end{align*}
and therefore
\begin{align*}
\int_{S_{M+\alpha,NK}^N} |f_{k-1} -f_k|\,d\rho
&\leq \frac{\pi \alpha\|x-\phi(x)\|}{\pi \alpha^2} +  \frac{|r_x^2-r_{\phi(x)}^2|}{\alpha^2}\\
&\leq \frac{2\|x-\phi(x)\|}{\alpha} + \frac{ 2\alpha\|x-\phi(x)\|}{\alpha^2}\\
&\leq \frac{4\|x-\phi(x)\|}{\alpha}.
\end{align*}

Together with Eqn.~\eqref{eq:densitybound1} we can conclude that 
\begin{equation*}
\int_{S_{M+\alpha,NK}^N} |f_{X} -f_Y|\,d\rho \leq \sum_{x\in X} \frac{{4}\|x-\phi(x)\|}{\alpha}.
\end{equation*}

\end{proof}


Next, we use the previous lemma to bound the distance between the mean diagrams for associated groupings.

\begin{lemma}\label{lem:massbound1}
Let $X=\{X_1, X_2, \ldots, X_N\}$ and $Y =\{Y_1, Y_2, Y_3, \ldots Y_N\}$ be in $(S_{M,K})^N$. 
Let $\phi_i:X_i \to Y_i$ be an optimal matching.
Assume that $X_i$ and $Y_i$ have the same number of off-diagonal points and that $\phi_i$ maps off-diagonal points to off-diagonal points. 
Then 
\begin{equation*}
W_2(\mean_X G,\mean_Y \phi(G)) \leq \overrightarrow{d_2}(X,Y) 
\end{equation*}
for all $G\in \GG$.

\end{lemma}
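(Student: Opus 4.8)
The plan is to exhibit one explicit bijection between the off-diagonal points of $\mean_X(G)$ and those of $\mean_Y(\phi(G))$, estimate its cost selection by selection using Lemma~\ref{Lemma:CloseMatchings}, and then use that $W_2$ is an infimum over bijections. Recall that a grouping $G$ is a collection of selections $s$, each contributing the single point $\mean_X(s)$ to the diagram $\mean_X(G)$, and that $\phi(G)$ is obtained by replacing each off-diagonal entry $z$ of each selection by $\phi(z)$ and leaving diagonal entries as $\Delta$. Because the standing hypothesis says each $\phi_i$ sends off-diagonal points of $X_i$ to off-diagonal points of $Y_i$, the selection $\phi(s)$ has exactly the same pattern of off-diagonal versus diagonal entries as $s$. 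Hence the assignment $\mean_X(s)\mapsto \mean_Y(\phi(s))$, after discarding any selections whose mean lands on $\Delta$ and completing with diagonal-to-diagonal assignments, is a bijection between the two mean diagrams, so
\begin{equation*}
W_2\big(\mean_X(G),\mean_Y(\phi(G))\big)^2 \le \sum_{s \in G} \big\|\mean_X(s) - \mean_Y(\phi(s))\big\|^2 .
\end{equation*}

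Next, fix a selection $s$ consisting of off-diagonal points $z_1,\dots,z_k$ (one from each of $k$ of the diagrams) and $N-k$ copies of $\Delta$, and set $\hat s = \phi(s)$, whose off-diagonal points are $\phi(z_1),\dots,\phi(z_k)$ in the matching coordinates and whose remaining entries are $\Delta$. This is precisely the setup of Lemma~\ref{Lemma:CloseMatchings}, which gives
\begin{equation*}
\big\|\mean_X(s) - \mean_Y(\phi(s))\big\|^2 \le \sum_{i=1}^{k} \big\| z_i - \phi(z_i)\big\|^2 .
\end{equation*}

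Finally, sum over $s \in G$. Since $G$ is a grouping, every off-diagonal point of every $X_i$ lies in exactly one selection, so the double sum collapses:
\begin{equation*}
\sum_{s \in G} \sum_{z \in s,\ z \neq \Delta} \big\| z - \phi(z)\big\|^2 = \sum_{x \in \overline X} \big\| x - \phi(x)\big\|^2 = \sum_{i=1}^{N} \sum_{x \in X_i,\ x\neq\Delta} \big\| x - \phi_i(x)\big\|^2 .
\end{equation*}
Because each $\phi_i$ matches off-diagonal points of $X_i$ to off-diagonal points of $Y_i$, there are no off-diagonal-to-diagonal contributions, so this inner sum equals $W_2(X_i,Y_i)^2$, and the total is $\sum_i W_2(X_i,Y_i)^2 = \overrightarrow{d_2}(X,Y)^2$. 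Chaining the three displays yields $W_2(\mean_X(G),\mean_Y(\phi(G))) \le \overrightarrow{d_2}(X,Y)$, as claimed.

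\textbf{Main obstacle.} Everything after the first display is routine bookkeeping (a grouping partitions the off-diagonal points; optimal matchings realize the Wasserstein cost). The only point that needs care is the first step: verifying that the ``same selections, moved points'' correspondence really is a legitimate bijection of abstract persistence diagrams — in particular handling selections whose mean happens to fall on $\Delta$, where the correspondence must be completed using the infinitely many diagonal copies — and checking that the off-diagonal/diagonal pattern is preserved so that Lemma~\ref{Lemma:CloseMatchings} applies selectionwise. This is exactly where the hypothesis that $\phi_i$ preserves off-diagonality (and that $X_i,Y_i$ have the same number of off-diagonal points) is used.
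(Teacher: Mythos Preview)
Your proposal is correct and follows essentially the same route as the paper: define the bijection $\mean_X(s)\mapsto\mean_Y(\phi(s))$ on selections, apply Lemma~\ref{Lemma:CloseMatchings} selectionwise, and collapse the double sum using that a grouping partitions the off-diagonal points. If anything, you are slightly more careful than the paper in flagging the case where a selection mean lands on $\Delta$ and in making explicit why the off-diagonal/diagonal pattern is preserved so that Lemma~\ref{Lemma:CloseMatchings} applies.
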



\begin{proof}
Let $m_1,\cdots,m_\ell$ be the selections in $G$.
Thus $\phi(m_1),\cdots,\phi(m_\ell)$ are the selections of $\phi(G)$, $\mean(m_1),\cdots,\mean(m_\ell)$ are the off-diagonal points of $\mean_X(G)$ and  $\mean(\phi(m_1)),\cdots,\mean(\phi(m_\ell))$ are the off-diagonal points of $\mean_Y(\phi(G))$.
 Define a bijection $\psi:\mean_X G\to\mean_Y \phi(G)$ by sending $\mean(m_i)$ to $\mean(\phi(m_i))$.
 Thus, 
 \begin{align*}
W_2(\mean_X G,\mean_Y \phi(G))^2
&= \sum_{i=1}^{\ell} \|\mean(m_i) - \mean(\phi(m_i))\|^2\\
& \leq \sum_{i=1}^\ell \sum_{x \in m_i} \|x-\phi(x)\|^2\\
& = \sum_{i=1}^N \sum_{x \in X_i} \|x - \phi(x)\|^2\\
& = \sum_{i=1}^N W_2(X_i,Y_i)^2\\
& = \overrightarrow{d_2}(X,Y)^2.
 \end{align*}

\end{proof}

Finally, we can bound the distance between the PFMs for the sets of diagrams.
\begin{prop}\label{prop:meandistbound}
Let $X=\{X_1, X_2, \ldots, X_N\}, Y =\{Y_1, Y_2, Y_3, \ldots Y_N\} \in (S_{M,K})^N$ with PFMs $\mu_X, \mu_Y \in \PP(S_{M,NK})$ respectively. 
Let $\phi_i:X_i \to Y_i$ be optimal  matchings. 
Further, assume that $X_i$ and $Y_i$ have the same number of off-diagonal points and that $\phi_i$ maps off-diagonal points to off-diagonal points. 
Then
\begin{equation*}
\WW_2(\mu_X, \mu_Y) \leq  \left( \frac{4\overline{M}^2}{\alpha}\sum_{x\in X} \|x-\phi(x)\|\right) ^{1/2} + \overrightarrow{d_2}(X,Y)
\end{equation*}
where $\overline{M}$ is the maximal distance between diagrams in $S_{M,NK}$.
\end{prop}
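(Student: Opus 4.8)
The plan is to build an explicit transport plan between $\mu_X$ and $\mu_Y$ using the bijection $\phi\colon\GG(X)\to\GG(Y)$ on groupings induced by the optimal matchings: under our standing assumption that off-diagonal points go to off-diagonal points, every grouping $G$ of $X$ determines a grouping $\phi(G)$ of $Y$ by replacing each labelled off-diagonal point $x$ with $\phi(x)$, and this is a well-defined bijection of grouping equivalence classes. Write $p_G=\P(\HH_X=G)$ and $q_G=\P(\HH_Y=\phi(G))$, so that $\mu_X=\sum_G p_G\,\delta_{\mean_X(G)}$ and $\mu_Y=\sum_G q_G\,\delta_{\mean_Y(\phi(G))}$, and note $\sum_G p_G=\sum_G q_G=1$ (for the latter using that $\phi$ is a bijection).

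First I would transport the common mass: for each $G$, move $\min\{p_G,q_G\}$ units from the atom $\mean_X(G)$ to the atom $\mean_Y(\phi(G))$. By Lemma~\ref{lem:massbound1} the per-unit cost is at most $\overrightarrow{d_2}(X,Y)^2$, and since the total mass moved is $\sum_G\min\{p_G,q_G\}\le 1$, this portion contributes at most $\overrightarrow{d_2}(X,Y)^2$ to the squared Wasserstein cost. After removing the common mass, $\mu_X$ leaves a residual positive measure of total mass $1-\sum_G\min\{p_G,q_G\}=\sum_G(p_G-q_G)_+$, and $\mu_Y$ leaves a residual of exactly the same total mass; couple these two residuals arbitrarily. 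Since both are supported in $S_{M,NK}$, whose diameter is $\overline M$, the per-unit cost here is at most $\overline M^2$, so this portion contributes at most $\overline M^2\sum_G(p_G-q_G)_+$. Now apply Lemma~\ref{lem:massbound}, which bounds $\sum_G(p_G-q_G)_+=\sum_{G\in\GG(X)}\max\{\P(\HH_X=G)-\P(\HH_Y=\phi(G)),0\}$ by $\tfrac{4}{\alpha}\sum_{x\in\overline X}\|x-\phi(x)\|$.

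Combining the two pieces gives
\[
\WW_2(\mu_X,\mu_Y)^2\ \le\ \overrightarrow{d_2}(X,Y)^2+\frac{4\overline M^2}{\alpha}\sum_{x\in\overline X}\|x-\phi(x)\|,
\]
and the claimed inequality follows from $\sqrt{a+b}\le\sqrt a+\sqrt b$.

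The only point needing care is the bookkeeping when distinct groupings $G\neq G'$ happen to satisfy $\mean_X(G)=\mean_X(G')$ (so that atoms of $\mu_X$ collapse): I would handle this by first constructing the coupling on the finite index set $\GG(X)\times\GG(Y)$ and then pushing it forward along $(G,G')\mapsto(\mean_X(G),\mean_Y(G'))$, which yields a genuine coupling of $\mu_X$ and $\mu_Y$ whose cost is no larger than the estimate above. I expect this collapsing issue, together with the verification that the two residual measures have equal total mass, to be the only subtleties; the rest is a direct assembly of Lemmas~\ref{lem:massbound} and~\ref{lem:massbound1}.
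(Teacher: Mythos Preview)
Your proposal is correct and uses the same ingredients as the paper (the bijection $\phi\colon\GG(X)\to\GG(Y)$, Lemma~\ref{lem:massbound} for the mass discrepancy, Lemma~\ref{lem:massbound1} for the per-grouping cost, and the diameter bound $\overline{M}$). The only cosmetic difference is that the paper routes through the intermediate measure $\nu=\sum_G \P(\HH_Y=\phi(G))\,\delta_{\mean_X G}$ and applies the triangle inequality for $\WW_2$, bounding the two legs separately, whereas you build a single coupling (common mass plus residual) and bound its cost directly; both arrive at the same estimate.
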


\begin{proof}

%

Let $\GG(X)$  be the set of groupings on the set of diagrams $X=\{X_1, X_2, \ldots, X_N\}$ and  $\GG(Y)$ the set of diagrams $Y = \{Y_1, Y_2, \ldots Y_N\}$. 
Recall that 
\begin{equation*}
\mu_X=\sum_{G\in \GG(X)} \P(\HH_X=G) \delta_{\mean_X G}. 
\end{equation*}
Using the bijection  $\phi: \GG(X) \to \GG(Y)$,  we can write 
\begin{equation*}
\mu_Y=\sum_{G\in \GG(X)} \P(\HH_Y=\phi(G)) \delta_{\mean_Y \phi(G)}.
\end{equation*}
Thus
\begin{equation}
\begin{aligned}\label{eq:triineq1}
\WW_2(\mu_X, \mu_Y) 
=& \WW_2\left( \sum_{G \in \GG(X)} \P(\HH_X=G) \delta_{\mean_X G}, \sum_{G\in \GG(X)} \P(\HH_Y=\phi(G)) \delta_{\mean_Y \phi(G)}\right)\\
\leq& \WW_2\left( \sum_{G \in \GG(X)} \P(\HH_X=G) \delta_{\mean_X G},  \sum_{G \in \GG(X)} \P(\HH_Y=\phi(G)) \delta_{\mean_X G}\right)\\
 &+ 
\WW_2\left(\sum_{G \in \GG(X)} \P(\HH_Y=\phi(G)) \delta_{\mean_X G},\sum_{G\in \GG(X)} \P(\HH_Y=\phi(G)) \delta_{\mean_Y \phi(G)}\right)\\
\end{aligned}
\end{equation}
by the triangle inequality.

We want to bound the first term, 
$ \WW_2\left( \sum_{G \in \GG(X)} \P(\HH_X=G) \delta_{\mean_X G},  \sum_{G \in \GG(X)} \P(\HH_Y=\phi(G)) \delta_{\mean_X G}\right) $, 
by constructing a transportation plan which keeps most of the mass stationary.
Consider the following plan:
 \begin{itemize}
 \item If $\P(\HH_X=G)\leq \P(\HH_Y=\phi(G))$ then keep all the mass at $\delta_{\mean_X G}$ at the same spot. 
 \item   If $\P(\HH_X=G)>\P(\HH_Y=\phi(G))$ then keep $\P(\HH_X=G)$ worth of mass at  $\delta_{\mean_X G}$ and redistribute the rest as needed. 
 \end{itemize}
The amount of mass that moves is then $\sum_{G\in \GG} \max\{\P(\HH_X=G) - \P(\HH_Y=\phi(G)), 0\}$. 
By Lemma \ref{lem:massbound}, 
\begin{equation*}
\sum_{G \in \GG(X)} \max\{\P(\HH_X=G) - \P(\HH_Y=\phi(G)), 0\} \leq \frac{4}{\alpha}\sum_{x\in X} \|x-\phi(x)\|.
\end{equation*}
The distance between diagrams is bounded by $\overline{M}$. 
Therefore,
\begin{align*}
 \WW_2\left( \sum_{G \in \GG(X)} \P(\HH_X=G) \delta_{\mean_X G},  \sum_{G \in \GG(X)} \P(\HH_Y=\phi(G)) \delta_{\mean_X G}\right) ^2
 \leq \frac{4\overline{M}^2}{\alpha}\sum_{x\in X} \|x-\phi(x)\|.
 \end{align*}
 
 Focusing on the second term of Eqn. \ref{eq:triineq1},
\begin{align*}
\WW_2\left(\sum_{G \in \GG(X)} \P(\HH_Y=\phi(G)) \delta_{\mean_X G},\sum_{G\in \GG(X)} \P(\HH_Y=\phi(G)) \delta_{\mean_Y \phi(G)}\right) 
&\leq \max_{G\in \GG} W_2\Big(\mean_X G,\mean_Y \phi(G)\Big) \\
&\leq \overrightarrow{d_2}(X,Y)
\end{align*}
  by Lemma \ref{lem:massbound1}.
Together, this implies
\begin{align*}
\WW_2(\mu_X, \mu_Y) 
&= \WW_2\left( \sum_{G \in \GG(X)} \P(\HH_X=G) \delta_{\mean_X G}, \sum_{G\in \GG(X)} \P(\HH_Y=\phi(G)) \delta_{\mean_Y \phi(G)}\right)\\
&\leq \left(\frac{4\overline{M}^2}{\alpha}\sum_{x\in X} \|x-\phi(x)\|\right)^{1/2} +  \overrightarrow{ d_2}(X,Y).
\end{align*}

\end{proof}

\subsection{Proof for points close to the diagonal}
\label{S:ProofB}

In Section \ref{S:ProofA}, we were able to use the fact that a set of optimal matchings $\phi_i:X_i \to Y_i$ which associate off-diagonal points together induces a bijection $\phi:\GG(X) \to \GG(Y)$.  
However, if we have a point $x \in X_i$ such that $\phi_i(x) = \Delta$, $\phi(G)$ is no longer injective as different selections can map to the same selection.
For example given two diagrams, the grouping $G_1 = \{(x,\Delta), (y,\Delta)\}$ and $G_2 = \{(x,y)\}$ have the same image.
Thus, in the following proposition, we bound the distance between PFMs for diagrams which only differ by points that are matched to the diagonal. 
\begin{prop}\label{prop:losetodiag}
Let $X = (X_1,\cdots,X_N)$ and  $\tX = (\tX_1,\cdots,\tX_N)$ denote sets of diagrams in $(S_{M,K})^N$ where the off-diagonal points in each $\tX_i$ is a subset of those in $X_i$.
Then
\begin{align*}
\WW_2(\mu_X,\mu_{\tX})^2 
&\leq \left(\frac{1}{N^2} + \frac{\overline{M}^2}{\alpha^2}\right)\sum_{x\in X\backslash \tX}\|x-\Delta\|^2
\end{align*}
where $\overline{M}$ is the maximum distance between any two diagrams in $S_{M,NK}$.
\end{prop}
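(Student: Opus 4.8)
The plan is to produce a single explicit transportation plan between $\mu_X$ and $\mu_{\tX}$ by coupling the randomness in the two constructions of $\HH$. On a common probability space, I would perturb each point shared by $X$ and $\tX$ exactly once and feed that perturbed location into \emph{both} draws, and perturb each point $x \in X\setminus\tX$ independently, using it only in the draw from $X$. This yields draws $X'$ and $\tX'$ with the correct marginal laws, hence random groupings $\HH_X$ and $\HH_{\tX}$, and hence a coupling of $\mu_X$ and $\mu_{\tX}$ through the pair $\bigl(\mean_X(\HH_X),\,\mean_{\tX}(\HH_{\tX})\bigr)$. Since the Wasserstein distance is an infimum over couplings,
\[
 \WW_2(\mu_X,\mu_{\tX})^2 \;\le\; \E\!\left[\,W_2\!\bigl(\mean_X(\HH_X),\,\mean_{\tX}(\HH_{\tX})\bigr)^2\,\right],
\]
so it suffices to estimate the right-hand side, which I would do by conditioning on how the removed points behave.

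Let $A$ be the event that \emph{every} $x \in X\setminus\tX$ is perturbed onto the diagonal. On $A$ the off-diagonal parts of $X'$ and $\tX'$ are literally the same labeled multiset, so (off the probability-zero event of a non-unique optimum) they have the same optimal grouping on off-diagonal points; completing each to a full grouping as in Section~\ref{Sect:DefnH} adds to $\HH_X$ exactly the trivial selections $s_x$, $x\in X\setminus\tX$. Hence $\mean_X(\HH_X)$ is $\mean_{\tX}(\HH_{\tX})$ together with the additional off-diagonal points $\mean(s_x)$; matching each $\mean(s_x)$ to the diagonal and every other point to itself is a valid bijection, so by Lemma~\ref{Lemma:MeanOfTrivial},
\[
 W_2\!\bigl(\mean_X(\HH_X),\,\mean_{\tX}(\HH_{\tX})\bigr)^2 \;\le\; \sum_{x\in X\setminus\tX}\|\mean(s_x)-\Delta\|^2 \;=\; \frac{1}{N^2}\sum_{x\in X\setminus\tX}\|x-\Delta\|^2 \qquad\text{on }A.
\]
On the complement $A^c$ I would discard all structure and use only the diameter bound $W_2(\cdot,\cdot)\le\overline M$ on $S_{M,NK}$, together with a union bound: with $r_x=\min\{\alpha,\|x-\Delta\|\}$, each $x$ is perturbed off the diagonal with probability $r_x^2/\alpha^2\le\|x-\Delta\|^2/\alpha^2$, so $\P(A^c)\le\alpha^{-2}\sum_{x\in X\setminus\tX}\|x-\Delta\|^2$.

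Splitting the expectation over $A$ and $A^c$, bounding $\P(A)\le 1$ in the first term and inserting the estimate for $\P(A^c)$ in the second, gives
\[
 \WW_2(\mu_X,\mu_{\tX})^2 \;\le\; \frac{1}{N^2}\sum_{x\in X\setminus\tX}\|x-\Delta\|^2 \;+\; \overline M^2\,\P(A^c) \;\le\; \Bigl(\frac{1}{N^2}+\frac{\overline M^2}{\alpha^2}\Bigr)\sum_{x\in X\setminus\tX}\|x-\Delta\|^2,
\]
which is the asserted inequality. I expect the delicate step to be the identification on $A$ that $\HH_X$ is precisely $\HH_{\tX}$ with the trivial selections adjoined: this requires care with the labeling conventions and with how partial selections are completed to groupings, and it is the reason for handling \emph{all} of the removed points at once via a direct coupling rather than removing them one at a time, since a one-point-at-a-time telescoping would produce cross terms on squaring the triangle inequality and would not yield the stated $\ell^2$ bound.
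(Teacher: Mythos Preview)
Your argument is correct and tracks the paper's proof closely. Both hinge on the same event (all removed points perturbed to the diagonal), the same identification $\HH_X=i_{\tX}(\HH_{\tX})$ on that event, Lemma~\ref{Lemma:MeanOfTrivial} for the cost there, and the diameter bound together with $\P(A^c)\le\alpha^{-2}\sum_{x}\|x-\Delta\|^2$ on the complement. The only real difference is packaging: the paper builds an explicit transport plan between the atomic measures via the injection $i_{\tX}:\GG(\tX)\hookrightarrow\GG(X)$ and then separately bounds the mass that ``stays'' versus the mass that must be redistributed, whereas you couple the perturbations directly and split the expectation over $A$ and $A^c$; your coupling restricted to $A$ is precisely the paper's plan, so the two routes are equivalent.
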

\begin{proof}
If $\|x-\Delta\|\geq \alpha$ for some $x \in X \setminus \tX$ then $\sum_{x\in X\backslash \tX}\|x-\Delta\|^2>\alpha^2$ and the theorem automatically holds as $\WW_2(\mu_X,\mu_{\tX})^2 \leq \overline{M}^2$ by the definition of $\overline{M}$. From now on assume that $\|x-\Delta\|<\alpha$ for all $x \in X \setminus \tX$.

Let $\GG(\tX)$ and $\GG(X)$ be the sets of groupings for $\tX$ and $X$ respectively.
There is an injection $i_{\tX} : \GG({\tX}) \hookrightarrow \GG(X)$ which maps a grouping $G \in \GG({\widetilde{X}})$ and  to the grouping in $\GG(X)$ which has all the same selections as $G$ along with the trivial selection for each unused point $x \in X_i \setminus \tX_i$. 
In order to bound $\WW_2(\mu_X,\mu_{\tX})$, construct a transportation plan from $\mu_{\tX}$ to $\mu_X$ as follows:
 \begin{itemize}
 \item If $\P(\HH_{\tX}=G)\leq \P(\HH_X=i_{\tX}(G))$ then move all the mass at $\delta_{\mean_{\tX} G}$ to $\delta_{\mean_{X} i_{\tX}(G)}$
 \item   If $\P(\HH_{\tX}=G)>\P(\HH_X=i_{\tX}(G))$ then move $\P(\HH_{X}=i_{\tX}(G))$ worth of mass from $\delta_{\mean_{\tX} G}$ to $\delta_{\mean_{X} i_{\tX}(G)}$ and redistribute the rest as needed. 
 \end{itemize}

First note that for any $G \in \GG(\tX)$, the amount of mass moved from $\delta_{\mean_{\tX} G}$ to its corresponding $\delta_{\mean_{X} i_{\tX}(G)}$ is bounded from above by $\P(\HH_{\tX}=G)$. 
Secondly, the amount of mass not moved from $\delta_{\mean_{\tX} G}$ to its corresponding $\delta_{\mean_{X} i_{\tX}(G)}$, is 
\begin{equation*}
\sum_{G\in \GG_{\tX}} \max\Big\{ \P(\HH_{\tX}=G)- \P(\HH_X=i_{\tX}(G)),\,\, 0\Big\}.
\end{equation*}
Therefore using this transport plan,
\begin{equation}
\begin{aligned}\label{eq:triineq2}
\WW_2(\mu_X,\mu_{\tX})^2 \leq &\sum_{G\in \GG_{\tX}} \P(\HH_{\tX}=G) W_2(\mean_{\tX} G, \mean_{X} i_{\tX}(G))^2\\
& + \overline{M}^2\sum_{G\in \GG_{\tX}} \max\{ \P(\HH_{\tX}=G)- \P(\HH_X=i_{\tX}(G)), 0\}
\end{aligned}
\end{equation}
where $\overline{M}$ is the maximum distance between any two diagrams in $S_{M,NK}$.

In order to bound $W_2(\mean_{\tX}(G), \mean_X(i_{\tX}(G)))^2$, observe that every off diagonal point that appears in $\mean_{\tX}(G)$ also appears in $\mean_X(i_{\tX}(G))$ and that the additional points in $\mean_X(i_{\tX}(G))$ correspond to the trivial selections $m_x$ for all $x \in X \setminus \tX$.
Each of these additional points are at distance $\|x - \Delta\|/N$ to the diagonal by Lemma \ref{Lemma:MeanOfTrivial}.
Thus, using the matching sending each of these additional points to the diagonal, 
\begin{equation*}
 W_2( \mean_{\tX}(G),\mean_X(i_{\tX}(G)))^2 \leq \sum_{x \in X \setminus \tX} \frac{\|x - \Delta\|^2}{N^2}. 
\end{equation*}
for all $G\in\GG_{\tX}$. 
Since $\sum_{G\in \GG_{\tX}} \P(\HH_{\tX}=G)=1$, 
\begin{equation*}
\sum_{G\in \GG_{\tX}} \P(\HH_{\tX}=G) W_2(\mean_{\tX} G, \mean_{X} i_{\tX}(G))^2
  \leq \sum_{x \in X \setminus \tX} \frac{\|x - \Delta\|^2}{N^2}.
\end{equation*}

Now we can consider the second half of Eqn.~\ref{eq:triineq2}.
Let $E$ be the event that all the points $x \in X\backslash \tX$ are perturbed to the diagonal. 
\begin{equation*}
\P(E)=\prod_{x\in X\backslash \tX} \left(1-\frac{\|x-\Delta\|^2}{\alpha^2}\right).
\end{equation*}
When $E$ is conditioned to be true, the randomly drawn diagrams for $X$ have the same distribution as that of $\tX$. This implies that $\P(\HH_X=i_{\tX}(G)|E)=\P(\HH_{\tX} = G).$

Thus, to bound $\sum_{G\in \GG_{\tX}} \max\{ \P(\HH_{\tX}=G)- \P(\HH_X=i_{\tX}(G)), 0\}$ observe that 
\begin{align*}
\P(\HH_X=i_{\tX}(G)) &> \P(\HH_X=i_{\tX}(G) \text{ and } E)\\
&=\P(\HH_X=i_{\tX}(G) | E)\P(E)\\
&=\P(\HH_{\tX}=G)\prod_{x\in X\backslash \tX} \left(1-\frac{\|x-\Delta\|^2}{\alpha^2}\right).
\end{align*}
This implies that 
\begin{align*}
\max\{ \P(\HH_{\tX}=G)- \P(\HH_X=i_{\tX}(G)), 0\} &\leq \P(\HH_{\tX}=G)\left(1-\prod_{x\in X\backslash \tX} \left(1-\frac{\|x-\Delta\|^2}{\alpha^2}\right)\right)\\
&\leq \P(\HH_{\tX}=G)\sum_{x\in X\backslash \tX}\frac{\|x-\Delta\|^2}{\alpha^2}
\end{align*} 
and hence
\begin{align*}
\sum_{G\in \GG_{\tX}} \max\{ \P(\HH_{\tX}=G)- \P(\HH_X=i_{\tX}(G)), 0\} &\leq \sum_{G\in \GG_{\tX}} \P(\HH_{\tX}=G)\left(\sum_{x\in X\backslash \tX}\frac{\|x-\Delta\|^2}{\alpha^2}\right)\\
&\leq\sum_{x\in X\backslash \tX}\frac{\|x-\Delta\|^2}{\alpha^2}.
\end{align*}

Substituting into \eqref{eq:triineq2} 
\begin{align*}
\WW_2(\mu_X,\mu_{\tX})^2 
&\leq 
\sum_{x \in X \setminus \tX} \frac{\|x - \Delta\|^2}{N^2} + \overline{M}^2\sum_{x\in X\backslash \tX}\frac{\|x-\Delta\|^2}{\alpha^2}\\
&\leq \left(\frac{1}{N^2} + \frac{\overline{M}^2}{\alpha^2}\right)\sum_{x\in X\backslash \tX}\|x-\Delta\|^2
\end{align*}
where $\overline{M}$ is the maximum distance between any two diagrams in $S_{M,NK}$.
 
\end{proof}

\subsection{Proof of Theorem \ref{Thm:Main}}
\label{S:ProofC}
With these results in hand, particularly Props.~\ref{prop:losetodiag} and \ref{prop:meandistbound}, we can prove the main theorem.

\begin{proof}[Proof of Thm.~\ref{Thm:Main}]
Let $X = (X_1 ,\cdots , X_N )$ and $Y = (Y_1, \cdots  , Y_N )$ denote sets of diagrams in $(S_{M,K} )^N$ with PFMs $\mu_X$ and $\mu_Y$ respectively.
We wish to find a constant $C$ such that $\WW_2 (\mu_X , \mu_Y ) \leq  C \overrightarrow{d_2}(X, Y )$.

For the moment assume that $\overrightarrow{d_2}(X, Y ) \leq 1$.
For each $i$, let $\phi_i:X_i \to Y_i$ be an optimal matching. 
Let $\tX_i$ be the diagram consisting of points $x \in X_i$ such that $\phi_i(x) = \Delta$. 
Likewise, let $\tY_i$ be the diagram consisting of points $y \in Y_i$ such that $\phi\inv(y) = \Delta$.
We will bound $\WW_2(\mu_X , \mu_Y )$ using the triangle inequality,
\begin{equation*}
 \WW_2(\mu_X , \mu_Y ) \leq  \WW_2(\mu_X , \mu_{\tX} ) +  \WW_2(\mu_{\tX} , \mu_{\tY} ) +  \WW_2(\mu_{\tY} , \mu_Y ).
\end{equation*}
Using Prop.~\ref{prop:losetodiag} for the first and third portions and Prop.~\ref{prop:meandistbound} for the second, we have
\begin{align*}
 \WW_2(\mu_X , \mu_Y ) 
 & \leq \left( \left(\frac{1}{N^2} + \frac{\overline{M}^2}{\alpha^2}\right)\sum_{x\in X\backslash \tX}\|x-\Delta\|^2\right)^{1/2}
+  \left(\left(\frac{4\overline M^2}{\alpha} \sum_{x \in \tX} \|x-\phi(x)\|\right)^{1/2} + \overrightarrow {d_2}(\tX,\tY) \right)  \\
 & \phantom{xxx}  + \left( \left(\frac{1}{N^2} + \frac{\overline{M}^2}{\alpha^2}\right)\sum_{y\in Y\backslash \tY}\|y-\Delta\|^2\right)^{1/2}.
\end{align*}
Let $U = \tfrac{1}{N^2} + \tfrac{\overline{M}^2}{\alpha^2}$ and $V = \tfrac{4\overline M^2}{\alpha}$.
Then via Cauchy-Schwartz,
\begin{align*}
 \WW_2&(\mu_X , \mu_Y )^2 \\
 &  \leq \left(U^{1/2}\left(\sum_{x\in X\backslash \tX}\|x-\Delta\|^2\right)^{1/2}
+  V^{1/2}\left(\sum_{x \in \tX} \|x-\phi(x)\|\right)^{1/2} + \overrightarrow {d_2}(\tX,\tY)   
  + U^{1/2}\left( \sum_{y\in Y\backslash \tY}\|y-\Delta\|^2\right)^{1/2} \right)^2\\
 & \leq  
 (2U+V+1)\left(  \sum_{x \in \tX} \|x-\phi(x)\| + \sum_{y\in Y\backslash \tY}\|y-\Delta\|^2 + \sum_{x\in X\backslash \tX}\|x-\Delta\|^2 + \overrightarrow {d_2}(\tX,\tY)^2 \right)  .
\end{align*}
Finally, we combine this with the fact that 
\begin{equation*}
 \sum_{y\in Y\backslash \tY}\|y-\Delta\|^2 + \sum_{x\in X\backslash \tX}\|x-\Delta\|^2 + \overrightarrow {d_2}(\tX,\tY)^2 = \overrightarrow {d_2}(X,Y)^2
\end{equation*}
to get the bound in the theorem.

\end{proof}


\section{Examples}
\label{sec:Examples}

\begin{figure}[h!]
        \centering
        \begin{subfigure}[b]{0.5\textwidth}
                \centering
                \includegraphics[width=\textwidth]{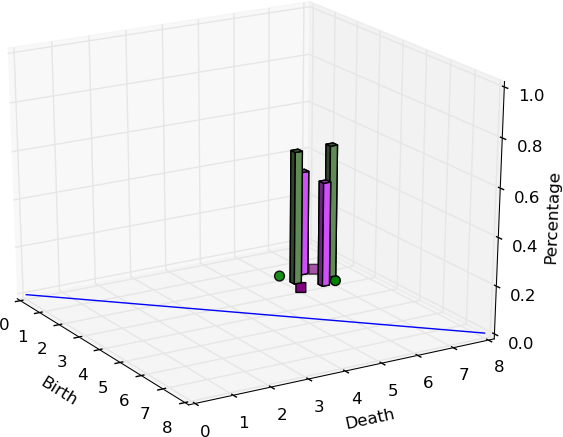}
                \label{fig:gull}
        \end{subfigure}%
        ~ 
        \begin{subfigure}[b]{0.5\textwidth}
                \centering
                \includegraphics[width=\textwidth]{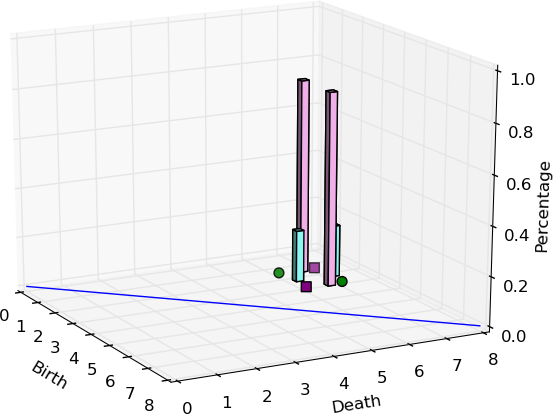}
                \label{fig:tiger}
        \end{subfigure}
        \caption{Change in mean distribution as a set of two diagrams moves through the problematic  configuration of Fig.~\ref{F:SquareExampleVineyard}. On the left we see the mean of two diagrams which form a rectangle that is slightly longer in the death-axis direction  On the right is the result for a rectangle that is quite a bit longer in the birth-axis direction.}
        \label{F:SquareColor}
\end{figure}

We now give some examples of the probabilistic \Frechet mean of a set of diagrams, introducing a useful way to visualize them along the way.
Recall that the mean distribution of a set of diagrams is a weighted sum of delta-measures, each one concentrated on the mean of one of the possible groupings among the diagrams, with the weights given by the probability that a perturbation of the diagrams would produce that grouping.
 
In Fig.~\ref{F:SquareColor}, we show a resolution to the discontinuity issue raised in Fig.~\ref{F:SquareExampleVineyard}, although this figure needs some explanation.
The flat colored dots on the left side of the figure represent a pair of diagrams which form a rectangle that is slightly longer in the death-axis direction. 
To approximate the probability of each possible matching between the pair, we perturbed the diagrams $100$ times with $\alpha = 0.3$
and $\eta_0$ equal to the uniform distribution, and simply counted the number of times each matching occurred.
The results are shown on the left side of the figure, where the height of a colored stack represents the weight of the diagram which contains the point at the bottom of the stack; note that the green stacks are slightly taller than the purple ones.
On the other hand, the right side of the same figure shows the mean distribution for a pair of diagrams which forms a rectangle that is quite a bit longer in the birth-axis direction.

For a more complicated example, we drew thirty different point clouds from a pair of linked annuli of different radii; one such point cloud is shown on the left of Fig.~\ref{F:TwoAnnulus}.
Then we computed the one-dimensional persistence diagram for each point cloud, using the recently-developed $M_{12}$ software package
\cite{deckard2013}.
As one might expect, each diagram contained a point for the big annulus, and point for the small annulus, and a good bit of noise along the diagonal. However, the birth times of the non-noisy points varied quite widely.
The set of thirty diagrams, overlaid in one picture, is shown on the right of the same figure.

\begin{figure}[bth]
        \centering
        \begin{subfigure}[b]{0.5\textwidth}
                \centering
                \includegraphics[width=\textwidth]{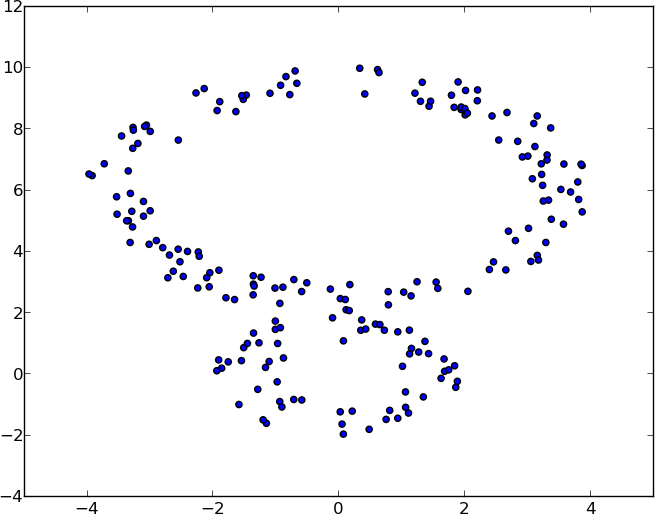}
                \label{fig:gull}
        \end{subfigure}%
        ~ 
        \begin{subfigure}[b]{0.5\textwidth}
                \centering
                \includegraphics[width=\textwidth]{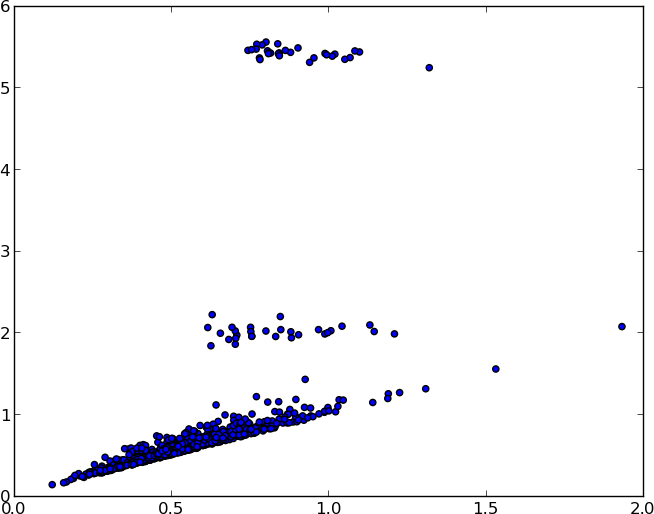}
                \label{fig:tiger}
        \end{subfigure}
        \caption{Thirty diagrams were created from thirty point clouds drawn from a double annulus. One such point cloud is shown on the left. All thirty diagrams are overlaid on the right.}\label{F:TwoAnnulus}
\end{figure}

Finally, we computed the mean distribution of these thirty diagrams, using the same approximation scheme as above.
On the left of Fig.~\ref{F:DistTwoAnnulus}, we see an overlay of the set of all diagrams which receive positive weight in the mean distribution, while the right side of the same figure displays the mean distribution using the same colored-stack scheme as in the example above.
Notice that the two very large stacks are actually at height one, which indicates that every single diagram in the mean contains the two non-noisy dots from the left side of the figure.

\begin{figure}[bth]
        \centering
        \begin{subfigure}[b]{0.5\textwidth}
                \centering
                \includegraphics[width=\textwidth]{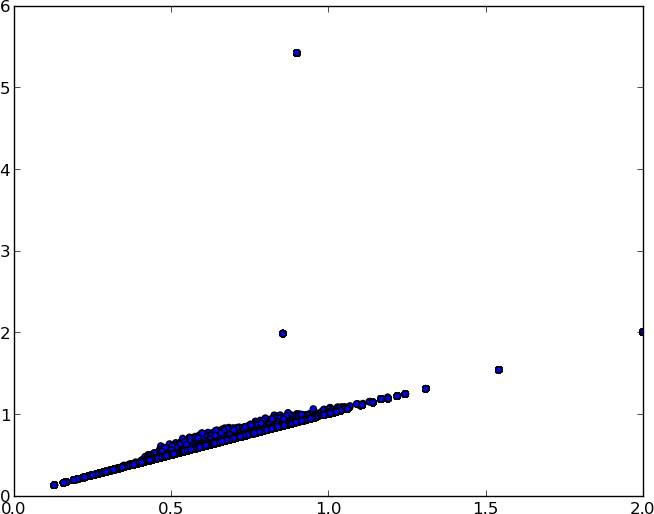}
                \label{fig:gull}
        \end{subfigure}%
        ~ 
        \begin{subfigure}[b]{0.5\textwidth}
                \centering
                \includegraphics[width=\textwidth]{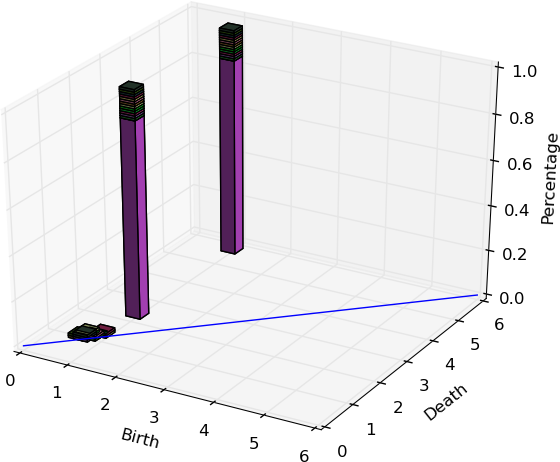}
                \label{fig:tiger}
        \end{subfigure}
        \caption{The mean distribution for a set of thirty diagrams sampled from a double annulus. The left side shows all positive-weight diagrams in the mean overlaid in one figure, while the right side indicates the weights in a three-dimensional plot.}\label{F:DistTwoAnnulus}
\end{figure}

\section{Conclusions  and Future Work}
\label{sec:Conclusion}

In this paper, we have defined a new mean which, unlike its predecessor, is continuous for continuously varying diagrams.
This mean is, in fact, a distribution on diagram space which is one feature of the distribution of diagrams from which it arose.
We hope that this new definition will provide a useful statistical tool for topological data analysis.
We also believe that this is an important step in the overall project of establishing persistent
homology as an important shape statistic.
Several questions remain, however, and there are obviously many directions for future research.
We list some of them here.

The most pressing need, of course, is to study how far we can take this new definition into the realm of traditional statistics. In particular, can we prove laws of large numbers, central limit theorems, and the like?
Will this mean actually provide a useful tool towards the bootstrapping idea discussed in the introduction?
Can we use this new mean, and the associated variance function, to provide more insight into the convergence rate theorems of \cite{chazal2013}?

On a more technical level, can we improve our continuity theorem to remove the reliance on the subspaces
$S_{M,K}$? At the moment, we can not find counterexamples to a more general statement, but nor can we prove the theorem without making finiteness assumptions.
We also conjecture that the constant can be improved.  
In particular, we are making a vast over-estimate by using $\overline M$.  
It would also be interesting to understand exactly how large a role $\alpha$ plays. 
Of course, if $\alpha$ goes to 0, the PFM should converge to the regular \Frechet mean, so can we  make a good choice of $\alpha$ based on the diagrams of study?

Note, too, that we have only addressed means and variances in this paper.
Another interesting statistical summary of data is the median; this will be addressed
in an upcoming paper \cite{Turner2013}.
Perhaps the most important project is to understand under what conditions persistence diagrams provide sufficient statistics for an object, a data cloud, etc.
The work in this paper will be a critical part of this effort.

\begin{figure}
 \centering
 \begin{subfigure}[b]{.4\textwidth}
  \includegraphics[width = \textwidth]{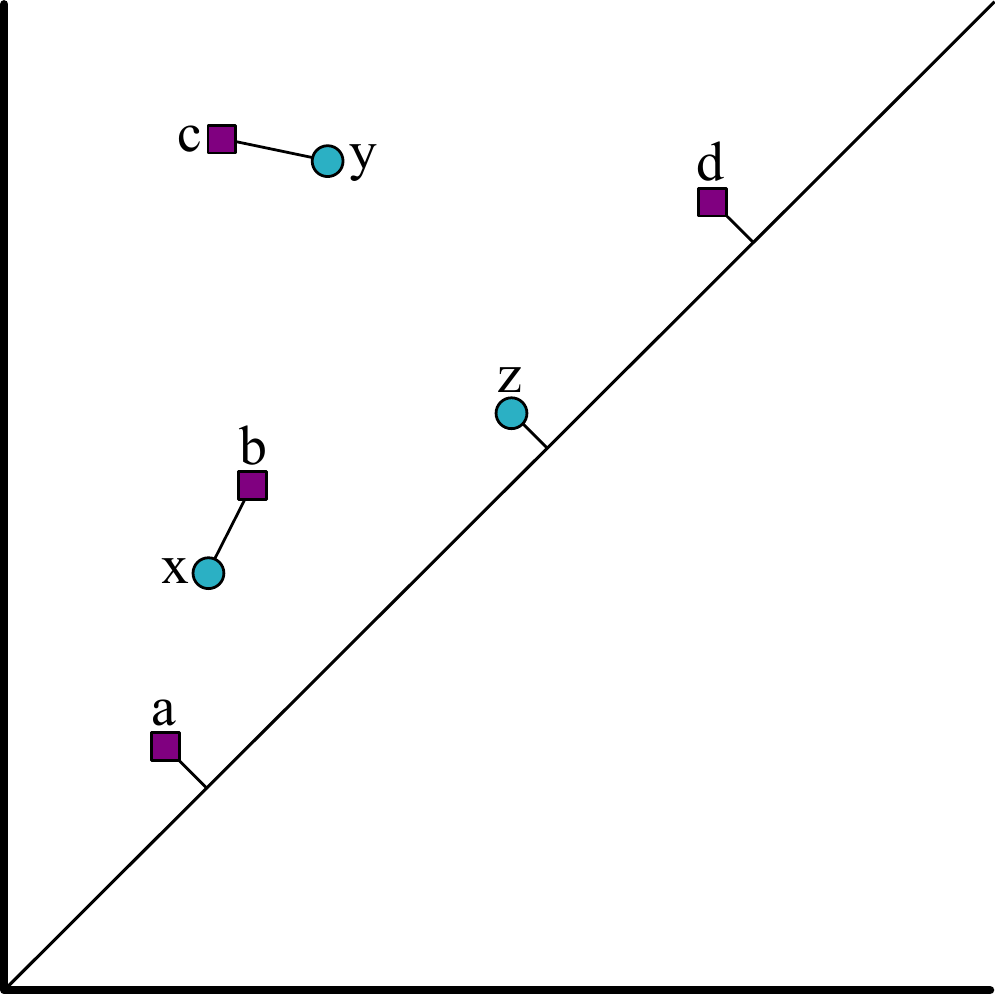}
  \caption{}
  \label{F:WassComputationA}
 \end{subfigure}
 \qquad
 \begin{subfigure}[b]{.4\textwidth}
  \includegraphics[width = \textwidth]{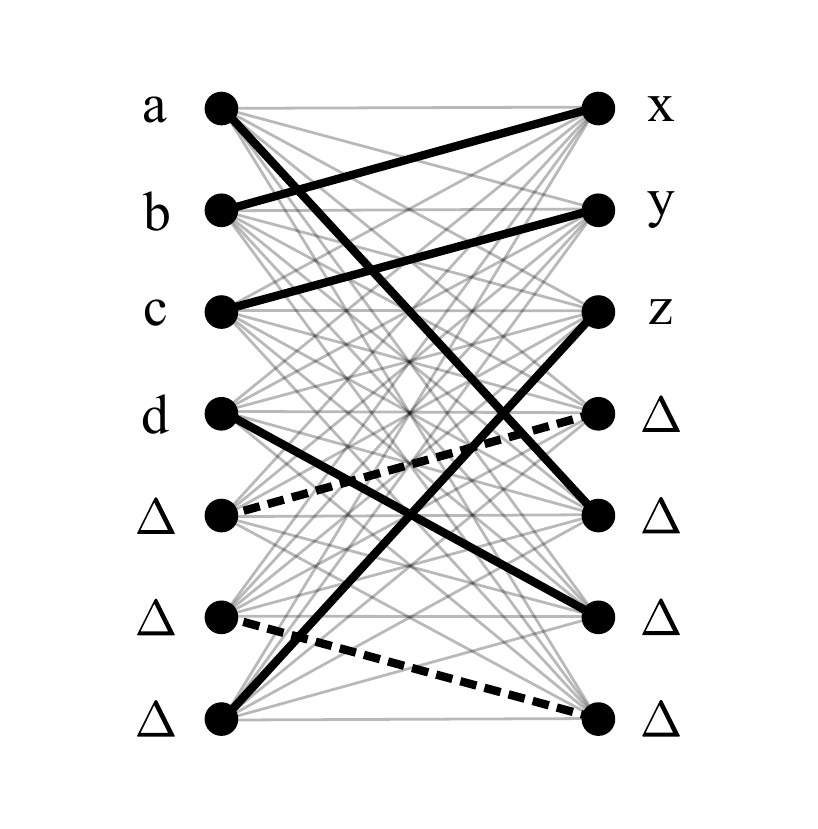}
  \caption{}
  \label{F:WassComputationB}
 \end{subfigure}
 \caption[Computation of the Wasserstein Distance]{Computation of the Wasserstein distance between $d_\blacksquare$ and $d_{\bullet}$ in Fig.~(a).  The problem is turned into the problem of computing a minimum cost grouping on the weighted graph in Fig.~(b).  The grouping chosen, shown in the bold edges in (b), is used to determine the matching for the diagrams in (a).  Dashed edges in (b) correspond to $\Delta-\Delta$ pairings, which contribute nothing to the total distance.  }
\label{F:WassComputation}
\end{figure}

\bibliographystyle{plain}
\bibliography{FuzzyMeans} 

 \appendix
 \section{Algorithms}
 \label{Appendix:Algorithms}

\begin{algorithm}    
\caption{Algorithm for computing the \Frechet Mean of a finite set of diagrams }     
\label{Alg:KatesMean}
\begin{algorithmic}  
    \Require Persistence diagrams $X_1,\cdots,X_N$
    \Ensure $Y$, a persistence diagram giving a local min of the \Frechet function
    
    \State Choose one of the $X_i$ randomly, set $Y = X_i$
    \State Initialize matching $G$
    \Comment $G[j,i]=$ the $x_k \in X_i$ matched \\
    \Comment with the point $y_j \in Y$
    
    \State stop $=$ \tt{False}
    \While {stop $==$ \texttt{False}}\\
    
	\For{ each diagram $X_i$} \Comment Determine the best $G$
	    \State $P = $WassersteinPairing($Y,X_i$)
	    \For{ each pair $(y_j,x_k) \in P$}
	      \State Set $G[j,i] = x_k$
	    \EndFor
	\EndFor\\
	
	\State Initialize empty diagram $Y'$
		    \Comment Move each point to the
	\For{ each point $y_j \in Y$}
		    \Comment  barycenter of its selection.
	    \State $y'_j = \textrm{mean}\{G[j,1],\cdots,G[j,N] \}$
		    \Comment $Y' = \textrm{mean}_X(G)$
	    \State Add $y'_j$ to $Y'$
	\EndFor\\
	
	\If{ WassersteinPairing$(Y,X_i) =$ WassersteinPairing$(Y',X_i)$ $\forall i$}
	    \State stop $=$ \texttt{True} 
	\EndIf
	\State $Y = Y'$
    \EndWhile\\
    \Return $Y$
\end{algorithmic}
\end{algorithm}

Here, we discuss the algorithm to compute an estimate of the \Frechet mean of a set of diagrams as given in \cite{Turner2011} using the vocabulary developed in this paper.
It is shown there that the \Frechet function is semiconcave for distributions with bounded support, so we can make use of a gradient descent algorithm to find local minima of the \Frechet function, Def.~\ref{D: Frechet}. 
In order to present the algorithm for computing the \Frechet mean, we must first describe the algorithm for computation of Wasserstein distance. 
In order to compute the Wasserstein distance between two diagrams, we will reduce the problem to computing a minimum cost grouping of a complete, weighted bipartite graph.

Let $X = [x_1,\cdots,x_k]$ and $Y = [y_1,\cdots,y_m]$ be diagrams.
In order to compute $W_2[L_2](X,Y)$, we construct a complete bipartite graph with vertex set $U \cup V$.  
There is a vertex in $U$ for each $x_i$, as well as $m$ vertices representing the abstract diagonal $\Delta$; similarly, $V$ has a vertex for each $y_i$ as well as $k$ vertices representing $\Delta$.  
The edge between points $x_i$ and $y_j$ is given weight $\|x_i - y_j\|^p$.
Each edge $(x_i,\Delta)$ and $(\Delta,y_j)$ has weight $\|x_i-\Delta\|^p$ and $\|y_j-\Delta\|^p$ respectively where $\|a-\Delta\| = \min_{z \in \Delta} \|a-z\|$.
Finally, edges between two vertices representing $\Delta$ are given weight 0. 
The minimum cost grouping algorithm typically used is the Hungarian algorithm of Munkres \cite{Munkres1957}. 

A minimum cost grouping in the bipartite graph immediately gives a matching $\phi:U\to V$ and the Wasserstein distance is given by the square root of the sum of the squares of the weights of the edges. 
Notice that since there could be multiple groupings for a bipartite graph which minimize the cost,  there could be multiple groupings which minimize the Wasserstein distance.
To compute the mean diagram, we will actually be more interested in the matching returned in this  algorithm than in the distance itself. 
Fig.~\ref{F:WassComputation} displays an example of a pair of  diagrams and their corresponding bipartite graph.

Now we are ready to give the algorithm for the \Frechet mean of a set of diagrams.  
Given a finite set of diagrams $\{X_1,\cdots,X_N\}$, start with a candidate for the mean, $Y$, and compute the matching for $W_2(Y,X_i)$.  
We denote this as WassersteinPairing$(Y,X_i)$.
From this, we have a grouping $G$ where $G[j,i]$ gives the point in $X_i$ which was paired to point $y_j \in Y$.
Set $Y' = \textrm{mean}(G)$.    
This new diagram is now the candidate for the mean and the process is repeated. 
The algorithm terminates when the Wasserstein pairing does not change.
In \cite{Turner2011}, the structure of $(D_{2},W_2)$ is used to prove that this algorithm terminates at a local minimum of the \Frechet function.
See Algorithm \ref{Alg:KatesMean} for the pseudocode.

\end {document}